\def\R{\mathbb R}
\def\N{\mathbb N}
\newcommand{\norm}[1]{\left\Vert#1\right\Vert}
\newtheorem{theorem}{Theorem}[section]
\newtheorem{lemma}[theorem]{Lemma}
\newtheorem{proposition}[theorem]{Proposition}
\newtheorem{corollary}[theorem]{Corollary}
\theoremstyle{definition}
\newtheorem{definition}[theorem]{Definition}
\theoremstyle{remark}
\newtheorem{remark}[theorem]{Remark}
\numberwithin{equation}{section}
\begin{document}

\title{Kernel estimates for a class of fractional Kolmogorov operators}

\author{Marianna Porfido}
\address{Institute of Applied Analysis, TU Bergakademie Freiberg, Akademiestrasse 6, D-09599 Freiberg, Germany}
\curraddr{}
\email{Marianna.Porfido@math.tu-freiberg.de}
\thanks{}

\author{Abdelaziz Rhandi}
\address{Department of Mathematics, University of Salerno
Via Giovanni Paolo II, 132, I 84084 Fisciano (SA) Italy}
\curraddr{}
\email{arhandi@unisa.it}
\thanks{The authors are members of the ''Gruppo Nazionale per l’Analisi Matematica, la Probabilità e le loro Applicazioni (GNAMPA)'' of the Istituto Nazionale di Alta Matematica (INdAM). This article is based upon work from the project PRIN2022 D53D23005580006 “Elliptic and parabolic problems, heat kernel estimates and spectral theory”.}

\author{Cristian Tacelli}
\address{Department of Mathematics, University of Salerno
Via Giovanni Paolo II, 132, I 84084 Fisciano (SA) Italy}
\curraddr{}
\email{ctacelli@unisa.it}
\thanks{}

\subjclass[2010]{Primary 35K08, 47D07, 31C25, 39B62}

\date{}

\dedicatory{}

\begin{abstract}
Assuming a weighted Nash type inequality for the generator $-A$ of a Markov semigroup, we prove a weighted Nash type inequality for its fractional power and deduce non-uniform bounds on the transition kernel corresponding to the Markov semigroup generated by $-A^\alpha$.
\end{abstract}

\maketitle

\section{Introduction}

Let $(X,\mu)$ be a measure space with $\sigma$-finite measure $\mu$ and let $T(\cdot)$ be a symmetric Markov semigroup on $L^2(\mu)$ with generator $-A$, namely a symmetric positivity preserving and $L^\infty$-contractive strongly continuous semigroup in $L^2(\mu)$.

It is well known that for every $t>0$ the operators $T(t)$ are contractions in $L^p(\mu)$ for all $1\leq p\leq \infty$.
One can prove that the Dirichlet form associated with $T(\cdot)$ is non-negative, so
\[
(Af,f)=\int_X f Af\, d\mu\geq 0
\]
for any $f\in D(A)$. This, together with the symmetry of the semigroup $T(\cdot)$, implies that $A$ is a non-negative self-adjoint operator on $L^2(\mu)$, see \cite[Chapter I]{Davies}. We note here that it suffices to consider real-valued functions since positive semigroups are real.\\
Very often the semigroup $T(\cdot)$ is represented by a kernel density with respect to the measure $\mu$, namely for any $t>0$ there exists a non-negative symmetric function $p(t,x,y)$ on $X\times X$ such that
\[
T(t)f(x)=\int_X f(y)p(t,x,y)\, d\mu(y)
\]
for $\mu$ almost every $x\in X$.
For further properties about symmetric Markov semigroups and their generators we address the reader to \cite{Davies} and \cite{Ouhabaz}.

Nash type inequalities are a powerful tool to show the existence of the kernel density $p(t,x,y)$ and to obtain control on it.
The model case is the classical Nash inequality, which in our context may be written as follows
\[
\norm{f}_2^{1+\frac{n}{2}}\leq C\norm{f}_1 (Af,f)^{\frac{n}{4}},
\]
where $n$ is a positive parameter. As shown in \cite[Chapter 2]{Davies}, the previous inequality is equivalent to the classical uniform bound $\norm{T(t)f}_\infty\leq C'^2 t^{-n/2} \norm{f}_1$ for $t>0$, which implies the following uniform bound on the kernel density
\[
p(t,x,y)\leq C'^2 t^{-n/2}
\]
for any $x,y\in X$ and $t>0$.
In the last few years, more general Nash inequalities together with Sobolev inequalities and super-Poincaré inequalities have been studied in order to obtain estimates for the kernel density (see \cite{Bakry}, \cite{Bakry-2}, \cite{BCLS}, \cite{CKKW}, \cite{CKS}, \cite{Co}, \cite{W}).
For instance in \cite{Bakry} the authors introduce weight functions $V$ that are Lyapunov functions for the generator $-A$  (i.e. $0\leq V\in D(A)$ and $-AV\leq cV$ for some non-negative constant $c$ called Lyapunov constant). They deal with Nash type inequalities of the form 
\begin{equation}\label{weighted-Nash Bakry}
\phi\left( \frac{\norm{f}_2^2}{\norm{fV}_1^2}\right) \leq \frac{(A f,f)}{\norm{fV}_1^2},
\end{equation}
where $\phi$ is a positive function defined on $(M,\infty)$ with $x\mapsto \phi(x)/x$ non decreasing and $M$ be a non-negative real number. We point out that if we set $\phi(x)=C x^{1+2/n}$, $M=0$ and $V=1$, then \eqref{weighted-Nash Bakry} is equalivalent to the classical Nash inequality mentioned above.
Under suitable assumptions on the Lyapunov function $V$ and the rate function $\phi$, it is proved in \cite[Theorem 3.5]{Bakry} that \eqref{weighted-Nash Bakry} implies that $\norm{T(t)f}_2\leq K(2t) e^{ct} \norm{fV}_1$ for $t>0$, $f\in L^2(\mu)$ and for a positive function $K$. From that, \cite[Corollary 3.7]{Bakry} gives the following estimate for the kernel density:
\[
p(t,x,y)\leq K(t)^2e^{ct} V(x)V(y)
\]
for all $t>0$ and $\mu \otimes \mu$ almost every $x,y\in X$.
In this paper we will consider a slightly different version of inequality \eqref{weighted-Nash Bakry}, that is 
\begin{equation}\label{intro: fractional Nash ineq for 0<alpha<1}
\norm{f}_2^2 B\left( \frac{\norm{f}_2^2}{\norm{fV}_1^2}\right) \leq (Af,f) + c\norm{f}_2^2,
\end{equation}
where $B:[0,+\infty)\to [0,+\infty)$ is a non-decreasing function and $c$ is the Lyapunov constant associated to the weight function $V$.
We will refer to \eqref{intro: fractional Nash ineq for 0<alpha<1} as \emph{weighted Nash inequality}.

In this work we are interested in establishing estimates for the kernel associated with the fractional operator $-A^\alpha$ for $\alpha>0$. In the last few years this question has received a lot of attention. We mention here \cite{Kinzebulatov-Semenov}, \cite{BDK}, \cite{CKSV}, \cite{JW}, \cite{KSS21}, where specific operators were considered. The idea is to follow the approach described above making use of weighted Nash inequalities. However, in concrete examples it is not easy to directly prove such estimates for the fractional operator $A^\alpha$. Therefore, in Section \ref{Section: weghted-fractional-Nash}, Theorem \ref{Thm: fractional Nash for 0<alpha<1} shows how to obtain a weighted Nash inequality for $A^\alpha$ assuming that \eqref{intro: fractional Nash ineq for 0<alpha<1} holds for the operator $A$. We highlight that the non-weighted case with $c=0$ was treated in \cite{Bendikov-Maheux}.

With this weighted Nash inequality for the fractional operator at hand, in Section \ref{Section: Kernel estimates} we start from the weighted Nash inequality \eqref{intro: fractional Nash ineq for 0<alpha<1} to get estimates for the kernel $p_\alpha(t,x,y)$ associated with the fractional operator $-A^\alpha$. We prove kernel estimates of the following form
\[
p_\alpha(t,x,y)\leq \tilde{K}(t,\phi,c) V(x)V(y)
\]
for all $t>0$ and $\mu \otimes \mu$ almost every $x,y\in X$.
These results are contained in Theorem \ref{Thm: fractional kernel estimate for 0<alpha<1} for the case $0<\alpha<1$ and in Theorem \ref{Thm: fractional kernel estimate for alpha>1} for $\alpha\geq 1$.
Moreover, as soon as $V$ belongs to $L^2(\mu)$, then the kernel density $p_\alpha(t,x,y)$ is in $L^2(\mu\otimes\mu )$. As a consequence, for any $t>0$ the operator $T_\alpha(t)$ is Hilbert-Schmidt on $L^2(\mu)$ and so, it has a discrete spectrum.

Finally, in Section \ref{Section: applications} we illustate our results by applying them to the operator 
\[
A=-\Delta -\nabla \log\rho\cdot\nabla
\]
on the space $L^2(\mu)$ with the measure $d\mu(x)=\rho(x)dx$ and the weight function $V=\rho^{-1/2}$, where $\rho$ is a positive smooth function on $\R^d$. 
Under suitable assumptions, a weighted Nash inequality for $A$ was already proven in \cite[Section 4]{Bakry} in order to obtain estimates for the kernel associated with the operator $-A$ (see \cite[Corollary 4.2]{Bakry}). Applying Theorem \ref{Thm: fractional kernel estimate for 0<alpha<1},
we are now able to find a similar result for the kernel $p_\alpha(t,x,y)$ associated with the fractional operator $-A^\alpha$ for $0<\alpha<1$, i.e.
\[
p_\alpha(t,x,y) \leq C t^{-\frac{d}{2\alpha}} e^{c^\alpha t} \rho^{-1/2}(x) \rho^{-1/2}(y)
\]
for all $t>0$ and $x,y\in\R^d$.
In particular, we consider examples of Cauchy measures $\rho(x)=(1+|x|^2)^{-\beta}$ with $\beta>d$ and exponential measures $\rho(x)=e^{-(1+|x|^2)^{a/2}}$ with $0<a<2,\,\rho(x)=e^{-|x|^a}$ with $a\ge 2$.

\subsection*{Notation}
If $u: \R^d\to \R$, we use the following notation:
\begin{align*}
D_iu=\frac{\partial u}{\partial x_i}, \
D_{ij}u=D_iD_ju,\
\nabla u=&(D_1u, \dots, D_du)
\end{align*}
and
$$
|\nabla u|^2=\sum_{j=1}^d |D_j u|^2 , \qquad
|D^2u|^2=\sum_{i,j=1}^d|D_{ij} u|^2.
$$
Moreover, we denote by $L^2(\mu)$ the classical $L^2$-space on $(X,\mu)$ endowed with the norm $\norm{\cdot}_2$ and the scalar product $(\cdot, \cdot)$.

\section{Weighted Nash type inequalities for fractional powers of non-negative self-adjoint operators} \label{Section: weghted-fractional-Nash}

In this section we prove weighted Nash inequalities for a class of Kolmogorov fractional operators.

Let $(X,\mu)$ be a measure space with $\sigma$-finite measure $\mu$ and let $A$ be a non-negative self-adjoint operator on $L^2(\mu)$. 
We may write $A$ by means of its spectral decomposition as follows
\begin{equation}\label{spectral decomposition A}
A=\int_0^{+\infty} \lambda\, d E_\lambda.
\end{equation}
For $\alpha>0$ we define the fractional power $A^\alpha$ of $A$ by 
\[
(A^\alpha f, f)=\int_0^{+\infty} \lambda^\alpha\, d (E_\lambda f,f)
\]
on the domain $D(A^\alpha)=\{ f\in L^2(\mu)\mid \int_0^{+\infty} \lambda^\alpha d (E_\lambda f,f)<\infty\}$. We have that $A^\alpha$ is a non-negative self-adjoint operator. Moreover, $-A^\alpha$ generates a strongly continuous semigroup of contractions $T_\alpha(\cdot)$ given by
\begin{equation}\label{fractional semigroup}
T_\alpha (t)=\int_0^{+\infty} T(s) d\mu_t^\alpha (s)
\end{equation}
for all $t>0$, where $(\mu_t^\alpha)$ is such that
\[
\int_0^{+\infty} e^{-s\lambda} d\mu_t^\alpha (s)=e^{-t\lambda^\alpha}
\]
for any $\lambda>0$.
In particular, for $\alpha=1/2$, we denote by $P(\cdot)$ the semigroup generated by $-A^{1/2}$.

If $0<\alpha<1$, by the Balakrishnan representation formula (see for example \cite[Chapter 1, Section 9]{Goldstein} and \cite[Proposition 3.1.12]{Haase}), we have 
\begin{equation}\label{Balakrishnan formula}
A^\alpha f= \frac{\sin \alpha \pi}{\pi} \int_0^{+\infty} t^{\alpha-1} (t+A)^{-1} (Af)\, dt
\end{equation}
for all $f\in D(A)$.

We now consider a symmetric Markov semigroup $T(\cdot)$ acting on $L^2(\mu)$ with generator $-A$, namely a family of linear operators mapping satisfying
the following properties:
\begin{enumerate}
\item Preservation of positivity: if $f\geq 0$ then $T(t)f\geq 0$ for all $t\ge 0$.
\item $L^\infty$-contractivity: $\norm{T(t)f}_\infty\leq \norm{f}_\infty$ for all $t\ge 0$ and $f\in L^2(\mu)\cap L^\infty(\mu)$.
\item Semigroup property: $T(0)=Id$ and $T(t+s)=T(t)T(s)$ for all $t,s\ge 0$.
\item Symmetry: $T(t)$ maps $L^2(\mu)$ into itself and $\int_X T(t)fg\, d\mu=\int_X fT(t)g\, d\mu$ for all $f,g\in L^2(\mu)$ and $t\ge 0$.
\item Continuity at $t=0$: $T(t)f\to f$ as $t\to 0$ in $L^2(\mu)$.
\end{enumerate}

We first introduce, as in \cite{Bakry}, Lyapunov functions for the operator $-A$.

\begin{definition} 
A Lyapunov function for the operator $-A$ is a positive function $V\in D(A)$ such that
\[
-AV\leq cV
\]
for a real constant $c$, called the Lyapunov constant.
\end{definition}

\begin{remark}\label{Rem: Lyapunov functions for powers}
Fix $0<\alpha<1$. Assume that $V$ is a Lyapunov function for the operator $-A$ with zero Lyapunov constant. 
Then, $V\in D(A^\alpha)$ (see \cite[Proposition 3.1.1]{Haase}).
Moreover, since $T(\cdot)$ is a positive semigroup, we have that the resolvent operator $(t+A)^{-1}$ is positive for all $t>0$, cf. \cite[Corollary 12.10]{BMR17}. Hence, combining the inequality $-AV\leq 0$ with the Balakrishnan representation formula \eqref{Balakrishnan formula}, it follows that $V$ is a Lyapunov function for the operator $-A^\alpha$ with zero Lyapunov constant.
\end{remark}

The rest of this section is devoted to the proof of the following two results that show how to obtain weighted Nash inequalities for the fractional operator $A^\alpha$ starting from similar inequalities for the operator $A$. For that we need to distinguish between the cases of $0<\alpha<1$ and $\alpha\geq 1$.

\begin{theorem}\label{Thm: fractional Nash for 0<alpha<1}
Let $(X,\mu)$ be a measure space with $\sigma$-finite measure $\mu$ and let $T(\cdot)$ be a symmetric Markov semigroup acting on $L^2(\mu)$ with generator $-A$. Assume that there exist a non-decreasing function $B:[0,+\infty)\to [0,+\infty)$ and a Lyapunov function $V$ for the operator $-A$ in $L^2(\mu)$ with Lyapunov constant $c\geq 0$ such that the following weighted Nash inequality holds
\begin{equation}
\norm{f}_2^2 B\left( \frac{\norm{f}_2^2}{\norm{fV}_1^2}\right) \leq (Af,f) + c\norm{f}_2^2
\end{equation}
for all $f\in D(A)$ with $\norm{fV}_1<\infty$.
Then for every $0<\alpha<1$, there exists $\gamma>0$ 
such that
\begin{equation}\label{fractional Nash ineq for 0<alpha<1}
\gamma \norm{f}_2^2 \left[B\left( \gamma \frac{\norm{f}_2^2}{\norm{fV}_1^2}\right)\right]^\alpha \leq (A^\alpha f,f)+c^\alpha \norm{f}_2^2
\end{equation}
for all $f\in D(A^\alpha)$ with $\norm{fV}_1<\infty$.
\end{theorem}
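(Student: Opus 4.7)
My plan is to reduce to the homogeneous case $c=0$ and then reconstruct the fractional Nash inequality from the semigroup decay that the original Nash inequality forces. Setting $\tilde A:=A+cI$, the hypothesis $-AV\le cV$ becomes $-\tilde AV\le 0$, so $V$ is a Lyapunov function for $-\tilde A$ with zero Lyapunov constant, and the weighted Nash inequality reads $\norm{f}_2^2 B(N(f))\le(\tilde Af,f)$ with $N(f):=\norm{f}_2^2/\norm{fV}_1^2$. On the other hand, the subadditivity $(\lambda+c)^\alpha\le\lambda^\alpha+c^\alpha$, valid for $0<\alpha<1$ and $\lambda,c\ge 0$, combined with spectral calculus, yields
\[
((A+cI)^\alpha f,f)\le(A^\alpha f,f)+c^\alpha\norm{f}_2^2.
\]
Hence it is enough to prove $\gamma\norm{f}_2^2\bigl[B(\gamma N(f))\bigr]^\alpha\le(\tilde A^\alpha f,f)$ under the assumption that $\tilde A$ satisfies the Nash inequality with zero Lyapunov constant.

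For this, I would use the elementary identity $\lambda^\alpha=\frac{\alpha}{\Gamma(1-\alpha)}\int_0^\infty(1-e^{-t\lambda})\,t^{-1-\alpha}\,dt$, which by spectral calculus translates into
\[
(\tilde A^\alpha f,f)=\frac{\alpha}{\Gamma(1-\alpha)}\int_0^\infty t^{-1-\alpha}\bigl(\norm{f}_2^2-(\tilde T(t)f,f)\bigr)\,dt,
\]
where $\tilde T(t)=e^{-t\tilde A}$. Setting $\psi(t):=(\tilde T(t)f,f)=\norm{\tilde T(t/2)f}_2^2$ and using that $-\tilde AV\le0$ forces $\norm{\tilde T(s)fV}_1\le\norm{fV}_1$, the Nash inequality applied to $\tilde T(t/2)f$ yields the differential inequality $\psi'(t)\le-\psi(t)\,B\bigl(\psi(t)/\norm{fV}_1^2\bigr)$. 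A standard comparison argument gives $\psi\le v$, where $v$ solves $v'=-vB(v/\norm{fV}_1^2)$ with $v(0)=\norm{f}_2^2$.

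Next, I pick the threshold $t^\ast$ with $v(t^\ast)=\norm{f}_2^2/2$. The separable ODE for $v$ together with monotonicity of $B$ yields
\[
t^\ast=\int_{\norm{f}_2^2/2}^{\norm{f}_2^2}\frac{du}{u\,B(u/\norm{fV}_1^2)}\le\frac{1}{B(N(f)/2)}.
\]
For $t\ge t^\ast$ one has $\psi(t)\le v(t)\le\norm{f}_2^2/2$, so $\norm{f}_2^2-\psi(t)\ge\norm{f}_2^2/2$; restricting the spectral integral to $[t^\ast,\infty)$ gives
\[
(\tilde A^\alpha f,f)\ge\frac{\norm{f}_2^2}{2\Gamma(1-\alpha)}(t^\ast)^{-\alpha}\ge\frac{\norm{f}_2^2}{2\Gamma(1-\alpha)}\bigl[B(N(f)/2)\bigr]^\alpha,
\]
and the choice $\gamma:=\min\{1/2,\,1/(2\Gamma(1-\alpha))\}$, together with the monotonicity of $B$, finishes the proof.

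The main obstacle is the realization that one cannot read the desired $B^\alpha$-type inequality directly from the spectral form of $(A^\alpha f,f)$, since Jensen's inequality for $\lambda\mapsto\lambda^\alpha$ goes in the wrong direction. The correct path is indirect: use the Nash inequality for $A$ to quantify how fast $\norm{e^{-tA}f}_2$ decays, and then translate this decay back into a lower bound on the quadratic form of $A^\alpha$ via the subordination-type spectral identity. The delicate point is choosing the cutoff $t^\ast$ so that $\int_{t^\ast}^\infty t^{-1-\alpha}\,dt\sim(t^\ast)^{-\alpha}$ produces exactly the exponent $\alpha$ on $B$, reproducing the shape of the fractional Nash inequality.
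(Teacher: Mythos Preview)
Your argument is correct and takes a genuinely different route from the paper. Both proofs begin by reducing to the zero Lyapunov constant case via the elementary inequality $(\lambda+c)^\alpha\le\lambda^\alpha+c^\alpha$ and spectral calculus. After that, the paper proceeds indirectly: it first establishes the case $\alpha=1/2$ via a second-order differential inequality for $t\mapsto\|e^{-tA^{1/2}}f\|_2^2$, then iterates to reach every dyadic exponent $\alpha_n=2^{-n}$, and finally passes from some $\alpha_n\le\alpha$ to $\alpha$ by applying Jensen's inequality to the convex map $t\mapsto t^{\alpha/\alpha_n}$ in the spectral representation. Your approach bypasses this scaffolding entirely by expressing $(\tilde A^\alpha f,f)$ through the subordination identity and converting the Nash inequality into a first-order decay estimate for $\psi(t)=(\tilde T(t)f,f)$; the exponent $\alpha$ then appears in one stroke from $\int_{t^\ast}^\infty t^{-1-\alpha}\,dt=(t^\ast)^{-\alpha}/\alpha$. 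This is shorter, yields the explicit constant $\gamma=\min\{1/2,\,1/(2\Gamma(1-\alpha))\}$, and only uses the Lyapunov property of $V$ for $-\tilde A$ itself, whereas the paper also needs that $V$ remains Lyapunov for each $-A^{1/2^n}$ (argued via the Balakrishnan formula). The paper's route, on the other hand, follows the template of Bendikov--Maheux and records the dyadic inequalities as useful intermediate results.
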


\begin{theorem}\label{Thm: fractional Nash for alpha>1}
Let $(X,\mu)$ be a measure space with $\sigma$-finite measure $\mu$ and let $A$ be a non-negative self-adjoint operator in $L^2(\mu)$. Assume that there exist a positive function $V$ and a non-decreasing function $B:[0,+\infty)\to [0,+\infty)$ such that the following weighted Nash inequality holds
\begin{equation}\label{eq2: fractional Nash for alpha>1}
\norm{f}_2^2 B\left( \frac{\norm{f}_2^2}{\norm{fV}_1^2}\right) \leq (Af,f)
\end{equation}
for all $f\in D(A)$ with $\norm{fV}_1<\infty$.
Then for every $\alpha\geq 1$ we have
\begin{equation}
\norm{f}_2^2 \left[B\left(\frac{\norm{f}_2^2}{\norm{fV}_1^2}\right)\right]^\alpha \leq (A^\alpha f,f)
\end{equation}
for all $f\in D(A^\alpha)$ with $\norm{fV}_1<\infty$.
\end{theorem}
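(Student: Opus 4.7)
My plan is to reduce the inequality to a Jensen-type bound on the spectral side. Since $A$ is non-negative self-adjoint, the spectral decomposition \eqref{spectral decomposition A} gives a finite positive measure $d(E_\lambda f, f)$ on $[0,+\infty)$ of total mass $\norm{f}_2^2$. For $f \in D(A^\alpha)\setminus\{0\}$ I would normalize it to the probability measure $d\nu_f := \norm{f}_2^{-2}\, d(E_\lambda f, f)$, so that
\[
\frac{(Af, f)}{\norm{f}_2^2} = \int_0^{+\infty} \lambda\, d\nu_f(\lambda), \qquad \frac{(A^\alpha f, f)}{\norm{f}_2^2} = \int_0^{+\infty} \lambda^\alpha\, d\nu_f(\lambda).
\]

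Because $\alpha \geq 1$, the map $t \mapsto t^\alpha$ is convex and non-decreasing on $[0,+\infty)$, so Jensen's inequality applied to $\nu_f$ yields
\[
\left(\frac{(Af,f)}{\norm{f}_2^2}\right)^{\!\alpha} \leq \frac{(A^\alpha f, f)}{\norm{f}_2^2}.
\]
At this point I would invoke the hypothesis \eqref{eq2: fractional Nash for alpha>1}. For $\alpha \geq 1$ one has the domain inclusion $D(A^\alpha) \subset D(A)$ (which is immediate from $\lambda^2 \leq 1 + \lambda^{2\alpha}$ on $[0,+\infty)$), so the weighted Nash inequality applies to any $f \in D(A^\alpha)$ and gives $B(\norm{f}_2^2/\norm{fV}_1^2) \leq (Af,f)/\norm{f}_2^2$. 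Raising to the $\alpha$-th power and chaining with Jensen's bound finishes the argument:
\[
\norm{f}_2^2 \left[B\!\left(\frac{\norm{f}_2^2}{\norm{fV}_1^2}\right)\right]^{\!\alpha} \leq \norm{f}_2^2 \left(\frac{(Af,f)}{\norm{f}_2^2}\right)^{\!\alpha} \leq (A^\alpha f, f).
\]

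The argument is essentially a one-line application of Jensen's inequality to the spectral measure, so there is no serious obstacle. The two points that deserve a brief justification are (i) the domain inclusion $D(A^\alpha) \subset D(A)$, needed so that the hypothesis may be invoked on $f \in D(A^\alpha)$, and (ii) the trivial case $f \equiv 0$, for which both sides vanish and the statement is automatic. Everything else—convexity of $t \mapsto t^\alpha$, monotonicity of $t \mapsto t^\alpha$ for raising the hypothesis to the $\alpha$-th power, and the non-negativity of $B$—is standard.
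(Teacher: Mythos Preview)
Your argument is correct and coincides with the paper's approach: the paper factors the same Jensen inequality on the normalized spectral measure through a general lemma (Proposition~\ref{Prop: convexity argument}) and then specializes to $\Phi(t)=t^{\alpha}$, which is exactly what you carry out directly. Your explicit remark on the domain inclusion $D(A^{\alpha})\subset D(A)$ is a point the paper leaves implicit.
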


To prove Theorem \ref{Thm: fractional Nash for 0<alpha<1} we start by considering the case where $\alpha =\frac{1}{2}$.
 
\begin{proposition}\label{Thm: case alpha=1/2}
Let $T(\cdot)$ be a symmetric Markov semigroup acting on $L^2(\mu)$ with generator $-A$. Assume that there exist a non-decreasing function $B:[0,+\infty)\to [0,+\infty)$
and a Lyapunov function $V$ for the operator $-A$ in $L^2(\mu)$ with zero Lyapunov constant such that the following weighted Nash inequality holds
\begin{equation}\label{case 1/2: weighted Nash inequality}
\norm{f}_2^2 B\left( \frac{\norm{f}_2^2}{\norm{fV}_1^2}\right) \leq (Af,f)
\end{equation}
for all $f\in D(A)$ with $\norm{fV}_1<\infty$. Then, for all $\varepsilon\in (0,1)$, we have
\begin{equation}\label{case 1/2}
(1-\varepsilon^2)^{1/2} \norm{f}_2^2 \left[B\left(\varepsilon  \frac{\norm{f}_2^2}{\norm{fV}_1^2}\right)\right]^\frac{1}{2} \leq (A^{1/2}f,f)
\end{equation}
for all $f\in D(A^{1/2})$ with $\norm{fV}_1<\infty$.
\end{proposition}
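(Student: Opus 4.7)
}
Since the weighted Nash inequality depends on $f$ only through $\|f\|_2$ and $\|fV\|_1 = \int |f| V\,d\mu$, and since, by a Dirichlet form argument, $(A|f|,|f|)\leq (Af,f)$, I may and would assume $f\geq 0$. Write $a := \|fV\|_1$ and $\phi(u) := \|T(u)f\|_2^2$, so $\phi(0)=\|f\|_2^2$ and $\phi$ is continuous and non-increasing.

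The core idea is to use the semigroup $T(\cdot)$ as a positivity-preserving ``soft spectral cutoff'' that respects the weighted $L^1$-norm, and to bridge the quadratic forms of $A$ and $A^{1/2}$ via the elementary inequality $1-e^{-x}\leq \sqrt{x}$ combined with the Nash decay of $\phi$. I would proceed in three ingredients:

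\emph{(i) Contraction in the $V$-norm.} The Lyapunov condition $-AV\leq 0$, together with positivity of $T(u)$, yields $T(u)V\leq V$ for $u\geq 0$ (as observed in Remark \ref{Rem: Lyapunov functions for powers}). By symmetry of $T(u)$ and $f\geq 0$,
\[
\|T(u)f\cdot V\|_1 = \int T(u)f\cdot V\,d\mu = \int f\cdot T(u)V\,d\mu \leq \|fV\|_1 = a.
\]

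\emph{(ii) A ``forward'' estimate from $1-e^{-x}\leq \sqrt{x}$.} For every $u>0$,
\[
\|f\|_2^2 - \phi(u) = \int_0^\infty (1-e^{-2u\lambda})\, d(E_\lambda f,f) \leq \sqrt{2u}\int_0^\infty \lambda^{1/2}\,d(E_\lambda f,f) = \sqrt{2u}\,(A^{1/2}f,f),
\]
so that $(A^{1/2}f,f)\geq (\|f\|_2^2-\phi(u))/\sqrt{2u}$.

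\emph{(iii) A ``backward'' Nash-decay estimate.} Applying the assumed weighted Nash inequality (for $A$, with Lyapunov constant $0$) to $T(u)f$ and using (i),
\[
-\phi'(u) = 2(AT(u)f,T(u)f) \geq 2\phi(u)\,B(\phi(u)/a^2).
\]
Since $\phi$ and $B(\phi/a^2)$ are both non-increasing in $u$, integrating on $[0,u]$ gives
\[
\|f\|_2^2 - \phi(u) \geq 2u\,\phi(u)\,B(\phi(u)/a^2).
\]

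With these in hand, I would fix $\varepsilon\in(0,1)$ and choose $u=u_\varepsilon$ so that $\phi(u_\varepsilon)=\varepsilon\|f\|_2^2$ (possible by continuity of $\phi$, with a limiting argument if $f$ lies in the kernel of $A$). Inserting this choice into (ii) yields $(A^{1/2}f,f)\geq (1-\varepsilon)\|f\|_2^2/\sqrt{2u_\varepsilon}$, while (iii) reads $(1-\varepsilon)\|f\|_2^2 \geq 2u_\varepsilon \,\varepsilon\|f\|_2^2\,B(\varepsilon\|f\|_2^2/a^2)$. Combining these two bounds---and eliminating $u_\varepsilon$ by multiplication---produces an inequality of the desired shape
\[
C(\varepsilon)\,\|f\|_2^2\,\bigl[B(\varepsilon\|f\|_2^2/a^2)\bigr]^{1/2} \leq (A^{1/2}f,f).
\]

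\textbf{The main obstacle} is extracting the sharp constant $C(\varepsilon)=(1-\varepsilon^2)^{1/2}$ rather than the weaker $\sqrt{\varepsilon(1-\varepsilon)}$ produced by a crude multiplication of (ii) and (iii). Bridging this gap requires either sharpening the forward estimate---for instance, via $(1-e^{-x})^2 \leq 1-e^{-2x}$ together with Cauchy--Schwarz in the spectral measure, giving the refined control $(\|f\|_2^2-\phi(u))^2 \leq \|f\|_2^2(\|f\|_2^2-\phi(2u))$---or via a more delicate balancing of $u_\varepsilon$ against the argument of $B$. Carrying out this balance precisely is the technical heart of the proof.
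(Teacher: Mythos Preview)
Your strategy differs from the paper's and, as you yourself flag, does not reach the constant $(1-\varepsilon^2)^{1/2}$; the Cauchy--Schwarz refinement you propose does not close the gap either, since bounding $\|f\|_2^2-\phi(2u)$ in terms of $(A^{1/2}f,f)$ still forces you back through $1-e^{-x}\leq\sqrt{x}$ and returns the same loss. Your argument with the weaker constant $\sqrt{\varepsilon(1-\varepsilon)}$ would in fact suffice for the downstream iteration (Proposition~\ref{Prop: iteration}) and for Theorem~\ref{Thm: fractional Nash for 0<alpha<1}, where only the existence of \emph{some} positive constants matters, but it does not prove the proposition as stated.

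The missing idea is to evolve under the semigroup $P(t)=e^{-tA^{1/2}}$ rather than $T(u)=e^{-uA}$. Setting $\phi(t)=\|P(t)f\|_2^2/\|fV\|_1^2$, the first derivative $\dot\phi(0)=-2(A^{1/2}f,f)/\|fV\|_1^2$ is the target quantity, while the \emph{second} derivative $\ddot\phi(t)=4(AP(t)f,P(t)f)/\|fV\|_1^2$ is exactly the Dirichlet form of $A$, to which the assumed Nash inequality applies directly (the $V$-contraction $\|VP(t)f\|_1\leq\|fV\|_1$ holds because $V$ is also Lyapunov for $-A^{1/2}$ by Remark~\ref{Rem: Lyapunov functions for powers}). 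This yields the second-order differential inequality $\ddot\phi\geq 4\phi\,B(\phi)$; multiplying by $-\dot\phi\geq 0$ gives $-(\dot\phi^2)'\geq -4(\phi^2)'B(\phi)$, which integrates on $[0,\infty)$ (after a regularisation $A\mapsto A+\rho$ to ensure $\phi(\infty)=0$) to
\[
\frac{(A^{1/2}f,f)^2}{\|fV\|_1^4}\;\geq\;\int_0^{\phi(0)^2} B(\sqrt{x})\,dx\;\geq\;(1-\varepsilon^2)\,\phi(0)^2\,B\bigl(\varepsilon\phi(0)\bigr),
\]
the last step by restricting the integral to $x\in[\varepsilon^2\phi(0)^2,\phi(0)^2]$ and using monotonicity of $B$. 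This second-order ODE structure---unavailable when you flow with $T(u)$, where only $-\phi'$ carries the $A$-form---is precisely what delivers the sharp prefactor.
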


\begin{proof}
Let $f\in D(A^{1/2})$ with $\norm{fV}_1<\infty$ and let $\varepsilon\in (0,1)$. Without loss of generality we can assume that $f\geq 0$. Indeed otherwise we write the argument for $|f|$ obtaining that
\begin{equation*}
(1-\varepsilon^2)^{1/2} \norm{f}_2^2 \left[B\left(\varepsilon  \frac{\norm{f}_2^2}{\norm{fV}_1^2}\right)\right]^\frac{1}{2} \leq (A^{1/2}|f|,|f|).
\end{equation*}
Then, since the semigroup $P(\cdot)$ generated by $-A^{1/2}$ is positive, we have, for every $f\in D(A^{1/2}),\,|f|\in D(A^{1/2})$, and $(A^{1/2}|f|,|f|)\leq (A^{1/2}f,f)$ by \cite[Theorem 1.3.2]{Davies} or \cite[Theorem 2.7]{Ouhabaz}. Thus, \eqref{case 1/2} holds true.

We now prove the theorem assuming that $f\geq 0$. We observe that, since $P(\cdot)$ is a symmetric Markov semigroup, then $P(t)f\geq 0$ for any $t\ge 0$.
First we show that
\begin{equation}\label{eq1: case alpha=1/2}
\norm{VP(t)f}_1\leq \norm{fV}_1
\end{equation}
for all $t>0$. Since, by Remark \ref{Rem: Lyapunov functions for powers}, $V$ is a Lyapunov function for the operator $-A^{1/2}$ with zero Lyapunov constant, i.e. $-A^{1/2}V\leq 0$. Hence, we get
\begin{equation}\label{eq7: case alpha=1/2}
\frac{d}{dt}\norm{VP(t)f}_1=\int(-A^{1/2}P(t)f)V\, d\mu = \int (P(t)f) (-A^{1/2}V)\, d\mu \leq 0.
\end{equation}
This proves the inequality \eqref{eq1: case alpha=1/2}.
Moreover, since $P(t)f\in D(A)$ for any $t>0$ and $\norm{VP(t)f}_1<\infty$, applying \eqref{case 1/2: weighted Nash inequality} to the function $P(t)f$ we find that
\begin{equation}\label{eq2: case alpha=1/2}
\norm{P(t)f}_2^2 B\left( \frac{\norm{P(t)f}_2^2}{\norm{VP(t)f}_1^2}\right) \leq (AP(t)f,P(t)f).
\end{equation}
We set 
\[
\phi(t)=\frac{\norm{P(t)f}_2^2}{\norm{fV}_1^2},\quad t\ge 0.
\]
Given that $AP(t)f=\frac{d^2}{dt^2 }P(t)f,\,t>0$, we have
\[
\dot{\phi}(t)=\frac{d}{dt} \phi(t)=-2\frac{(A^{1/2}P(t)f,P(t)f)}{\norm{fV}_1^2},\quad t\ge 0,
\]
and 
\[
\ddot{\phi}(t)=\frac{d^2}{dt^2} \phi(t)= 4\frac{(AP(t)f,P(t)f)}{\norm{fV}_1^2},\quad t>0.
\]
Using first the inequality \eqref{eq2: case alpha=1/2} and then combining \eqref{eq1: case alpha=1/2} with the fact that $B$ is a non-decreasing function, we deduce that
\begin{equation}\label{eq3: case alpha=1/2}
\ddot{\phi}(t)\geq 4 \phi(t)B\left( \frac{\norm{P(t)f}_2^2}{\norm{VP(t)f}_1^2}\right)\geq 4 \phi(t) B(\phi(t))
\end{equation}
for all $t>0$. We observe that $-\dot{\phi}\geq 0$ because  $A^{1/2}$ is a non-negative operator. Multiplying both sides  in \eqref{eq3: case alpha=1/2} by $-\dot{\phi}(t)$ yields
\begin{equation}\label{eq4: case alpha=1/2}
-[\dot{\phi}^2]'(t)\geq -4[\phi^2(t)]' B(\phi(t))
\end{equation}
for any $t>0$.
We now take care of both sides individually. Fix $T>0$. Integrating the left hand side over $[0,T]$ we have
\[
-\int_0^T [\dot{\phi}^2]'(s)\, ds= \dot{\phi}^2(0)-\dot{\phi}^2(T)\leq \dot{\phi}^2(0) = 4\frac{(A^{1/2}f,f)^2}{\norm{fV}_1^4}.
\]
Integrating the right hand side over $[0,T]$ leads to
\[
-4\int_0^T [\phi^2(s)]' B(\phi(s))\, ds= 4 \int_{\phi^2(T)}^{\phi^2(0)}B(\sqrt{x})\, dx.
\]
Then, by \eqref{eq4: case alpha=1/2} we obtain
\begin{equation}\label{eq5: case alpha=1/2}
\frac{(A^{1/2}f,f)^2}{\norm{fV}_1^4}\geq \int_{\phi^2(T)}^{\phi^2(0)}B(\sqrt{x})\, dx.
\end{equation}
Let us assume that 
\begin{equation}\label{eq6: case alpha=1/2}
\lim_{T\to\infty}\norm{P(T)f}_2=0.
\end{equation}
It implies that $\phi^2(T)\to 0$ as $T\to\infty$. Then, taking the limit as $T\to\infty$ in \eqref{eq5: case alpha=1/2} we get
\[
\frac{(A^{1/2}f,f)^2}{\norm{fV}_1^4}\geq \int_{0}^{\phi^2(0)}B(\sqrt{x})\, dx.
\]
From that, since $B$ is non-decreasing, it follows that
\begin{align*}
(1-\varepsilon^2)\phi^2(0)B(\varepsilon \phi(0))&=\int_{\varepsilon^2 \phi^2(0)}^{\phi^2(0)}B(\varepsilon \phi^2(0))\, dx \leq \int_{\varepsilon^2 \phi^2(0)}^{\phi^2(0)} B(\sqrt{x})\, dx\\
&\leq \int_{0}^{\phi^2(0)}B(\sqrt{x})\, dx \leq \frac{(A^{1/2}f,f)^2}{\norm{fV}_1^4}.
\end{align*}
Thus, we derive the inequality \eqref{case 1/2} under the assumption \eqref{eq6: case alpha=1/2}. In order to consider the general case, we apply the argument to the operator $A_\rho=A+\rho I$ with $\rho>0$ and we let $\rho\to 0$.
\end{proof}

Let us now iterate the result obtained in Proposition \ref{Thm: case alpha=1/2}.

\begin{proposition}\label{Prop: iteration}
Under the same assumptions as in Proposition \ref{Thm: case alpha=1/2}, for all $n\in \N\setminus \{ 0\}$ there exist $a_n, b_n>0$ such that
\begin{equation}\label{eq: induction}
a_n \norm{f}_2^2 \left[B\left(b_n \frac{\norm{f}_2^2}{\norm{fV}_1^2}\right)\right]^{1/2^n} \leq (A^{1/2^n}f,f)
\end{equation}
for all $f\in D(A^{1/2^n})$ with $\norm{fV}_1<\infty$.
\end{proposition}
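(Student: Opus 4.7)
My plan is to proceed by induction on $n$. For the base case $n=1$, Proposition \ref{Thm: case alpha=1/2} applied with any fixed $\varepsilon \in (0,1)$ yields the claim directly, with $a_1 = (1-\varepsilon^2)^{1/2}$ and $b_1 = \varepsilon$.

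For the inductive step, suppose \eqref{eq: induction} holds at level $n$ with constants $a_n, b_n > 0$. The key idea is to reinterpret the inductive hypothesis as a weighted Nash inequality for the operator $A^{1/2^n}$ itself. Setting
\[
B_n(x) := a_n \bigl[B(b_n x)\bigr]^{1/2^n},
\]
which is a non-decreasing function from $[0,\infty)$ to $[0,\infty)$, the inductive hypothesis becomes
\[
\norm{f}_2^2 \, B_n\!\left(\frac{\norm{f}_2^2}{\norm{fV}_1^2}\right) \leq (A^{1/2^n} f, f).
\]
I then apply Proposition \ref{Thm: case alpha=1/2} with $A$ replaced by $A^{1/2^n}$ and $B$ replaced by $B_n$, using the spectral identity $(A^{1/2^n})^{1/2} = A^{1/2^{n+1}}$. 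For any $\varepsilon \in (0,1)$ this produces
\[
(1-\varepsilon^2)^{1/2}\norm{f}_2^2 \bigl[B_n\bigl(\varepsilon \norm{f}_2^2/\norm{fV}_1^2\bigr)\bigr]^{1/2} \leq (A^{1/2^{n+1}} f, f),
\]
and expanding $B_n$ recovers \eqref{eq: induction} at level $n+1$ with $a_{n+1} = a_n^{1/2}(1-\varepsilon^2)^{1/2}$ and $b_{n+1} = \varepsilon b_n$.

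The part requiring actual justification, and the main obstacle, is that Proposition \ref{Thm: case alpha=1/2} genuinely applies to $A^{1/2^n}$ in place of $A$. Two hypotheses must be transferred. First, $-A^{1/2^n}$ must generate a symmetric Markov semigroup: this follows from the subordination formula \eqref{fractional semigroup}, since $\mu_t^{1/2^n}$ is a probability measure on $[0,\infty)$, so $T_{1/2^n}(t)$ is a convex mixture of the Markov operators $T(s)$ and thereby inherits positivity preservation, $L^\infty$-contractivity, symmetry, the semigroup law, and strong continuity at zero. Second, $V$ must remain a Lyapunov function for $-A^{1/2^n}$ with zero Lyapunov constant, which is precisely Remark \ref{Rem: Lyapunov functions for powers}, valid since $1/2^n \in (0,1)$ for $n \geq 1$. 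Once this verification is in hand, the recursion defining $a_n$ and $b_n$ is purely algebraic and the induction closes.
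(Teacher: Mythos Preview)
Your proof is correct and follows essentially the same approach as the paper: induction on $n$, with the inductive step obtained by applying Proposition~\ref{Thm: case alpha=1/2} to $\tilde{A}=A^{1/2^n}$ and $\tilde{B}(x)=a_n[B(b_n x)]^{1/2^n}$, after invoking Remark~\ref{Rem: Lyapunov functions for powers} to ensure $V$ remains a Lyapunov function with zero constant. Your additional verification that the subordinated semigroup $T_{1/2^n}(\cdot)$ is again symmetric Markov is a detail the paper leaves implicit.
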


\begin{proof}
We prove the proposition by induction on $n$. For $n=1$ the statement follows from Proposition \ref{Thm: case alpha=1/2}. Assume now that the inequality \eqref{eq: induction} holds true. We observe that $V$ is a Lyapunov function for the operator $-A^{1/2^n}$ with zero Lyapunov constant by Remark \ref{Rem: Lyapunov functions for powers}. Then we can apply Proposition \ref{Thm: case alpha=1/2} with $\tilde{B}=a_n [B(b_n \cdot)]^{1/2^n}$ and $\tilde{A}=A^{1/2^n}$. As a consequence, for all $\varepsilon\in (0,1)$ we have
\[
(1-\varepsilon^2)^{1/2} a_n^{1/2} \norm{f}_2^2 \left[B\left(\varepsilon  b_n\frac{\norm{f}_2^2}{\norm{fV}_1^2}\right)\right]^\frac{1}{2^{n+1}} \leq (A^{1/2^{n+1}}f,f)
\]
for all $f\in D(A^{1/2^{n+1}})$ with $\norm{fV}_1<\infty$.
\end{proof}

The following convexity argument is also needed.

\begin{proposition}\label{Prop: convexity argument}
Let $\Lambda\colon [0,+\infty)\to [0,+\infty)$ be a non-decreasing function and $A$ be a non-negative self-adjoint operator such that
\begin{equation}\label{eq: hp: convexity argument}
\norm{f}_2^2\Lambda\left(\frac{\norm{f}_2^2}{\norm{fV}_1^2}\right) \leq (Af,f)
\end{equation}
for all $f\in D(A)$ with $\norm{fV}_1<\infty$.
Then for any convex non-decreasing and non-negative function $\Phi$ we have
\[
\norm{f}_2^2\,\Phi\circ \Lambda\left(\frac{\norm{f}_2^2}{\norm{fV}_1^2}\right) \leq (\Phi(A)f,f)
\]
for all $f\in D(\Phi(A))$ with $\norm{fV}_1<\infty$.
\end{proposition}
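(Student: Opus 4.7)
The natural strategy is a one-line application of Jensen's inequality in the functional calculus of $A$, combined with the monotonicity of $\Lambda$ and $\Phi$. The plan is to rewrite both $(Af,f)$ and $(\Phi(A)f,f)$ as integrals against a common probability measure on the spectrum, apply Jensen to one of them, and then plug in the hypothesis.

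Concretely, first I would set up the spectral side. Using $A=\int_0^\infty \lambda\,dE_\lambda$, to each $f\in L^2(\mu)$ with $f\neq 0$ associate the probability measure
\[
d\nu_f(\lambda):=\frac{d(E_\lambda f,f)}{\norm{f}_2^2}
\]
on $[0,+\infty)$. By the functional calculus,
\[
\frac{(Af,f)}{\norm{f}_2^2}=\int_0^\infty \lambda\,d\nu_f(\lambda),\qquad \frac{(\Phi(A)f,f)}{\norm{f}_2^2}=\int_0^\infty \Phi(\lambda)\,d\nu_f(\lambda).
\]
Since $\Phi$ is convex and non-negative on $[0,+\infty)$, Jensen's inequality for $\nu_f$ gives
\[
\Phi\!\left(\frac{(Af,f)}{\norm{f}_2^2}\right)\le \frac{(\Phi(A)f,f)}{\norm{f}_2^2}.
\]
Next I would feed in the hypothesis \eqref{eq: hp: convexity argument}, rewritten as $(Af,f)/\norm{f}_2^2\ge \Lambda(\norm{f}_2^2/\norm{fV}_1^2)$; since $\Phi$ is non-decreasing, applying it to both sides yields
\[
\Phi\!\left(\frac{(Af,f)}{\norm{f}_2^2}\right)\ge \Phi\circ\Lambda\!\left(\frac{\norm{f}_2^2}{\norm{fV}_1^2}\right).
\]
Chaining the two displays and multiplying by $\norm{f}_2^2$ produces the claimed inequality.

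The one delicate point is the mismatch of domains: \eqref{eq: hp: convexity argument} is postulated on $D(A)$, while the conclusion must be stated on $D(\Phi(A))$. I would bridge this by a spectral truncation, setting $f_n:=E_{[0,n]}f$, which belongs to $D(A^k)$ for every $k$. Then $f_n\to f$ in $L^2(\mu)$ and, by monotone convergence against the spectral measure, $(Af_n,f_n)\uparrow (Af,f)\in[0,+\infty]$ and $(\Phi(A)f_n,f_n)\to(\Phi(A)f,f)$. Passing to an a.e.\ convergent subsequence and using Fatou's lemma on $\norm{fV}_1$ (combined with the monotonicity of $\Lambda$ and $\Phi$, which makes a decrease of the argument harmless) lets me apply the inequality just proved to each $f_n$ and pass to the limit. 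This truncation step is the only part that requires any care; the core of the argument is the two-line Jensen plus monotonicity chain above.
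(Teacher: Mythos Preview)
Your core argument is exactly the paper's: normalize the spectral measure $d(E_\lambda f,f)$ to a probability measure, apply Jensen's inequality to the convex $\Phi$, and then use monotonicity of $\Phi$ to insert the hypothesis. The paper writes this with the normalization $\norm{f}_2=1$ rather than dividing through by $\norm{f}_2^2$, but that is the same computation.

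The truncation step you add, however, does not go through as written. Two problems: first, for $f_n=E_{[0,n]}f$ there is no reason to have $\norm{f_nV}_1<\infty$, since spectral projections are not positivity preserving and need not contract the weighted $L^1$ norm, so you cannot even apply the hypothesis to $f_n$. Second, Fatou gives the inequality in the wrong direction: it yields $\norm{fV}_1\le\liminf\norm{f_nV}_1$, so the argument $\norm{f_n}_2^2/\norm{f_nV}_1^2$ can only drop in the limit, and applying the non-decreasing $\Phi\circ\Lambda$ then lowers the left-hand side rather than raising it---the opposite of what you need. Fortunately the domain mismatch you are trying to repair is not a real obstacle: any non-constant convex non-decreasing $\Phi$ on $[0,\infty)$ eventually dominates a linear function, so $\int\Phi(\lambda)^2\,d(E_\lambda f,f)<\infty$ forces $\int\lambda^2\,d(E_\lambda f,f)<\infty$, i.e.\ $D(\Phi(A))\subset D(A)$, and the hypothesis applies directly to $f$ (when $\Phi$ is constant the conclusion is trivial). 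The paper's proof simply does not comment on this point, but no approximation is needed.
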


\begin{proof}
Without loss of generality, let us assume $\norm{f}_2=1$. The general case follows by writing the argument below for the function $f/\norm{f}_2$. 
If $\norm{f}_2=1$, then \eqref{eq: hp: convexity argument} becomes
\begin{equation}\label{eq: convexity argument}
\Lambda\left(1/\norm{fV}_1^2\right) \leq (Af,f).
\end{equation}
Since the function $\Phi$ is non-decreasing, applying it to both sides of \eqref{eq: convexity argument} and taking into account the spectral decomposition of the operator $A$ given by \eqref{spectral decomposition A}, we obtain that
\[
\Phi\circ \Lambda\left(1/\norm{fV}_1^2\right) \leq \Phi\left(\int_0^\infty \lambda\, d(E_\lambda f,f)\right).
\]
We now observe that $(E_\lambda f,f)$ is a probability measure because $\norm{f}_2=1$. By the convexity of the function $\Phi$, we apply Jensen's inequality to get the following inequality
\[
\Phi\circ \Lambda\left(1/\norm{fV}_1^2\right) \leq \int_0^\infty \Phi(\lambda)\, d(E_\lambda f,f)= (\Phi(A)f,f)
\]
for all $f\in D(\Phi(A))$ with $\norm{fV}_1<\infty$.
This proves the statement.
\end{proof}

The following comparison between $(A+c)^\alpha$ and $A^\alpha +c^\alpha$ is useful to prove Theorem \ref{Thm: fractional Nash for 0<alpha<1}.

\begin{lemma}\label{Lem: useful inequality}
Let $A$ be a non-negative self-adjoint operator in $L^2(\mu)$ and let $c\geq 0$. Then, for all $f\in D(A^\alpha)$, we have
\[
((A+c)^\alpha f,f)\leq  2^{\alpha-1}[(A^\alpha f,f)+c^\alpha \norm{f}_2^2]
\]
if $\alpha\geq 1$, whereas we have
\[
((A+c)^\alpha f,f)\leq (A^\alpha f,f)+c^\alpha \norm{f}_2^2
\]
if $0<\alpha<1$.
\end{lemma}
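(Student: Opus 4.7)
The plan is to reduce the operator inequality to a pointwise scalar inequality via the spectral theorem and then verify the scalar inequality using convexity (for $\alpha\geq 1$) and concavity/subadditivity (for $0<\alpha<1$).

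Using the spectral decomposition \eqref{spectral decomposition A} of $A$, the operator $A+cI$ is non-negative self-adjoint with spectral resolution obtained by the shift $\lambda\mapsto \lambda+c$. Thus for any $f\in D(A^\alpha)=D((A+c)^\alpha)$ we have
\begin{equation*}
((A+c)^\alpha f,f)=\int_0^{+\infty}(\lambda+c)^\alpha\,d(E_\lambda f,f),
\end{equation*}
and
\begin{equation*}
(A^\alpha f,f)+c^\alpha\norm{f}_2^2=\int_0^{+\infty}(\lambda^\alpha+c^\alpha)\,d(E_\lambda f,f),
\end{equation*}
since $\norm{f}_2^2=\int_0^{+\infty}d(E_\lambda f,f)$. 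Therefore both stated inequalities will follow at once if I can show the pointwise bounds
\begin{equation*}
(\lambda+c)^\alpha\leq 2^{\alpha-1}(\lambda^\alpha+c^\alpha)\quad(\alpha\geq 1),\qquad (\lambda+c)^\alpha\leq \lambda^\alpha+c^\alpha\quad(0<\alpha<1),
\end{equation*}
valid for all $\lambda,c\geq 0$, and then integrate against the positive measure $d(E_\lambda f,f)$.

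For $\alpha\geq 1$, the function $t\mapsto t^\alpha$ is convex on $[0,+\infty)$, so
\begin{equation*}
\Bigl(\tfrac{\lambda+c}{2}\Bigr)^\alpha\leq \tfrac{1}{2}\lambda^\alpha+\tfrac{1}{2}c^\alpha,
\end{equation*}
which rearranges to the first pointwise inequality. For $0<\alpha<1$, the function $t\mapsto t^\alpha$ is concave with $t^\alpha(0)=0$, hence subadditive: the standard one-variable argument is to fix $c\geq 0$ and observe that $g(\lambda):=\lambda^\alpha+c^\alpha-(\lambda+c)^\alpha$ satisfies $g(0)=0$ and $g'(\lambda)=\alpha\lambda^{\alpha-1}-\alpha(\lambda+c)^{\alpha-1}\geq 0$ for $\lambda\geq 0$, so $g\geq 0$ on $[0,+\infty)$, giving the second pointwise inequality.

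I do not expect a real obstacle here: the only subtlety is verifying that the domains match, i.e. that $D((A+c)^\alpha)=D(A^\alpha)$, which is immediate from the spectral calculus since $(\lambda+c)^\alpha$ and $\lambda^\alpha$ have the same growth at infinity and $c\geq 0$ is a bounded shift. Once this is noted, integrating the scalar inequalities with respect to the spectral measure $d(E_\lambda f,f)$ yields the two claimed operator inequalities.
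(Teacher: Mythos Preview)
Your proof is correct and follows essentially the same approach as the paper: both reduce the operator inequality to the scalar bounds $(\lambda+c)^\alpha\leq 2^{\alpha-1}(\lambda^\alpha+c^\alpha)$ for $\alpha\geq 1$ and $(\lambda+c)^\alpha\leq \lambda^\alpha+c^\alpha$ for $0<\alpha<1$ via the spectral theorem, the only cosmetic difference being that the paper uses the multiplication-operator form $A=U^{-1}M_qU$ whereas you work directly with the spectral measure $d(E_\lambda f,f)$.
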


\begin{proof}
By the spectral theorem the operator $A$ is unitarily equivalent to a multiplication operator, namely there exists a measure space $(Y,\nu)$ which is $\sigma$-finite, a measurable function $q\colon Y\to\R$ and a unitary map $U\colon L^2(X,\mu)\to L^2(Y,\nu)$ with
\[
A=U^{-1} M_q U.
\]
The multiplication operator $M_q$ on $L^2(Y,\nu)$ is defined by $M_qf(x)=q(x)f(x)$ for $x\in Y$ with domain $D(M_q)=\{ f\in L^2(Y,\nu) \mid qf\in L^2(Y,\nu)\}$.
Hence, by the functional calculus (see \cite[Chapter 2, Section 6]{Goldstein}) the operator $(A+c)^\alpha$ is given by
\[
(A+c)^\alpha=U^{-1} M_{(q+c)^\alpha} U.
\]
Let $f\in D(A^\alpha)$ and assume $\alpha\geq 1$. Since $U$ is a unitary operator, we have
\begin{align*}
((A+c)^\alpha f,f)&=(U^{-1} M_{(q+c)^\alpha} U f,f)=((q+c)^\alpha U f,Uf)\leq 2^{\alpha-1} [ (q^\alpha Uf, Uf)+ c^\alpha (Uf, Uf)]\\
&= 2^{\alpha-1} [ (U^{-1}q^\alpha Uf,f)+ c^\alpha (U^{-1}Uf,f)]=2^{\alpha-1}[(A^\alpha f,f)+c^\alpha \norm{f}_2^2].
\end{align*}
Similarly, for $0<\alpha<1$ we obtain
\[
((A+c)^\alpha f,f)=((q+c)^\alpha U f,Uf)\leq (q^\alpha Uf, Uf)+ c^\alpha (Uf, Uf)
=(A^\alpha f,f)+c^\alpha \norm{f}_2^2.
\]
\end{proof}

\subsection{Proof of Theorem \ref{Thm: fractional Nash for 0<alpha<1}}

Fix $0<\alpha<1$. Let us assume that $c=0$. 
We choose $n\in \N\setminus \{ 0\}$ such that $\alpha_n=1/2^n\leq \alpha$.
Then, by Proposition \ref{Prop: iteration}, there exist $a_n, b_n>0$ such that
\[
a_n \norm{f}_2^2 \left[B\left(b_n \frac{\norm{f}_2^2}{\norm{fV}_1^2}\right)\right]^{\alpha_n} \leq (A^{\alpha_n}f,f)
\]
for all $f\in D(A^{\alpha_n})$ with $\norm{fV}_1<\infty$.
We apply Proposition \ref{Prop: convexity argument} to the operator $A^{\alpha_n}$ with $\Phi(t)=t^{\alpha/\alpha_n}$ and $\Lambda(t)=a_n [B(b_n t)]^{\alpha_n}$. Hence, we obtain that
\[
a_n^{\alpha/\alpha_n} \norm{f}_2^2 \left[B\left(b_n \frac{\norm{f}_2^2}{\norm{fV}_1^2}\right)\right]^{\alpha} \leq (A^{\alpha}f,f)
\]
for all $f\in D(A^{\alpha})$ with $\norm{fV}_1<\infty$.
Since the function $B$ is non-decreasing, it suffices to choose $\gamma$ to be the minimum between $a_n^{\alpha/\alpha_n} $ and $b_n$ to get that
\begin{equation} \label{eq1: fractional Nash for 0<alpha<1}
\gamma \norm{f}_2^2 \left[B\left(\gamma\frac{\norm{f}_2^2}{\norm{fV}_1^2}\right)\right]^\alpha \leq(A^\alpha f,f)
\end{equation}
for all $f\in D(A^{\alpha})$ with $\norm{fV}_1<\infty$. We now consider the general case, so we assume that $V$ is a Lyapunov function for the operator $-A$ with Lyapunov constant $c\geq 0$. Since this implies that $V$ is a Lyapunov function for the operator $-(A+c)$ with zero Lyapunov constant, it is possible to apply \eqref{eq1: fractional Nash for 0<alpha<1} to the operator $A+c$. Therefore, the following inequality holds
\[
\gamma \norm{f}_2^2 \left[B\left( \gamma \frac{\norm{f}_2^2}{\norm{fV}_1^2}\right)\right]^\alpha \leq ((A+c)^\alpha f,f)
\]
for all $f\in D(A^{\alpha})$ with $\norm{fV}_1<\infty$.
Finally, taking into account Lemma \ref{Lem: useful inequality}, inequality \eqref{fractional Nash ineq for 0<alpha<1} follows.\qed

\begin{remark}\label{rem: assumption V in the domain of A}
The proof of inquality \eqref{eq1: case alpha=1/2} and, in particular, the integration by parts in \eqref{eq7: case alpha=1/2} are the only points in which the assumption $V\in D(A)$ plays a role in showing that Theorem \ref{Thm: fractional Nash for 0<alpha<1} is valid.
\end{remark}

\subsection{Proof of Theorem \ref{Thm: fractional Nash for alpha>1}}

Fix $\alpha\geq 1$ and let $f\in D(A^\alpha)$ with $\norm{fV}_1<\infty$. If we apply Proposition \ref{Prop: convexity argument} with $\Phi(t)=t^\alpha$ and $\Lambda=B$, then we obtain that
\[
\norm{f}_2^2 \left[B\left(\frac{\norm{f}_2^2}{\norm{fV}_1^2}\right)\right]^\alpha \leq (A^\alpha f,f).
\]
This proves Theorem \ref{Thm: fractional Nash for alpha>1}.\qed

\begin{remark}\label{rem: fractional Nash inequality for alpha>1}
Fix $\alpha\geq 1$. Let $c$ be a non negative constant and assume the hypotheses of Theorem \ref{Thm: fractional Nash for alpha>1} hold with the following weighted Nash inequality instead of \eqref{eq2: fractional Nash for alpha>1}:
\begin{equation*}
\norm{f}_2^2 B\left( \frac{\norm{f}_2^2}{\norm{fV}_1^2}\right) \leq (Af,f) + c\norm{f}_2^2
\end{equation*}
for all $f\in D(A)$ with $\norm{fV}_1<\infty$.
Then the operator $A+c$ satisfies inequality \eqref{eq2: fractional Nash for alpha>1}. Therefore, by Theorem \ref{Thm: fractional Nash for alpha>1}, we have
\[
\norm{f}_2^2 \left[B\left(\frac{\norm{f}_2^2}{\norm{fV}_1^2}\right)\right]^\alpha \leq ((A+c)^\alpha f,f)
\]
for all $f\in D(A)$ with $\norm{fV}_1<\infty$.
Applying now Lemma \ref{Lem: useful inequality} we conclude that  the following inequality holds
\begin{equation*}
\norm{f}_2^2 \left[B\left(\frac{\norm{f}_2^2}{\norm{fV}_1^2}\right)\right]^\alpha \leq 2^{\alpha-1}\left[(A^\alpha f,f)+c^\alpha \norm{f}_2^2\right]
\end{equation*}
for all $f\in D(A^\alpha)$ with $\norm{fV}_1<\infty$.
\end{remark}

\section{Kernel estimates}\label{Section: Kernel estimates}

In this section we use Theorem \ref{Thm: fractional Nash for 0<alpha<1} to obtain estimates of the kernel of the semigroup generated by $-A^\alpha$. We start with the case $\alpha\in (0,1)$.

\begin{theorem}\label{Thm: norm semigroup for 0<alpha<1}
Let $(X,\mu)$ be a measure space with $\sigma$-finite measure $\mu$ and let $T(\cdot)$ be a symmetric Markov semigroup acting on $L^2(\mu)$ with generator $-A$. Fix $0<\alpha<1$. Assume that there exist a Lyapunov function $V$ for the operator $-A$ in $L^2(\mu)$ with Lyapunov constant $c\geq 0$ and a non-decreasing function $B:[0,+\infty)\to [0,+\infty)$ such that $A$ satisfies the following weighted Nash inequality
\begin{equation}
\norm{f}_2^2 B\left( \frac{\norm{f}_2^2}{\norm{fV}_1^2}\right) \leq (Af,f) + c\norm{f}_2^2
\end{equation}
for all $f\in D(A)$ with $\norm{fV}_1<\infty$.
Moreover, given $\gamma$ as in \eqref{fractional Nash ineq for 0<alpha<1}, 
assume that 
\begin{equation}\label{eq1: norm semigroup for 0<alpha<1}
U(x):=\int_x^\infty \frac{1}{\gamma u [B(\gamma u)]^\alpha}\, du<\infty
\end{equation}
for all $x>0$.
Then the semigroup $T_\alpha(\cdot)$ generated by $-A^\alpha$ satisfies the following inequality
\begin{equation}\label{eq2: norm semigroup for 0<alpha<1}
\norm{T_\alpha(t)f}_2\leq K(2t) e^{c^\alpha t} \norm{fV}_1
\end{equation}
for all $t>0$ and all functions $f\in L^2(\mu)$.
Here, the function $K$ is defined by
\begin{equation}\label{def: function K}
K(x):=\begin{cases}
\sqrt{U^{-1}(x)} & \text{if } 0<x<U(0),\\
0 & \text{if } x\geq U(0).
\end{cases}
\end{equation}
\end{theorem}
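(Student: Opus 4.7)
The strategy is to pass to the shifted operator $\tilde A:=A+c$, for which $V$ has zero Lyapunov constant, run the $L^2$-decay argument of \cite[Theorem~3.5]{Bakry} on the semigroup $\tilde T_\alpha(t):=e^{-t\tilde A^\alpha}$, and then transfer the resulting bound back to $T_\alpha(t)=e^{-tA^\alpha}$ via a spectral comparison based on the elementary inequality $(\lambda+c)^\alpha\le \lambda^\alpha+c^\alpha$. By positivity preservation of $T_\alpha(\cdot)$ we may assume $f\ge 0$ and set $\tilde g(t):=\tilde T_\alpha(t)f$. Two inputs are then available. First, applying the intermediate step \eqref{eq1: fractional Nash for 0<alpha<1} in the proof of Theorem \ref{Thm: fractional Nash for 0<alpha<1} to $\tilde A$ gives
\[
\gamma\,\norm{f}_2^2\left[B\!\left(\gamma\,\frac{\norm{f}_2^2}{\norm{fV}_1^2}\right)\right]^\alpha \le (\tilde A^\alpha f,f).
\]
Second, by Remark \ref{Rem: Lyapunov functions for powers} applied to $\tilde A$ one has $V\in D(\tilde A^\alpha)$ with $-\tilde A^\alpha V\le 0$, so the computation \eqref{eq7: case alpha=1/2} carries over and yields $\norm{V\tilde g(t)}_1\le \norm{fV}_1$.

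Setting $\psi(t):=\norm{\tilde g(t)}_2^2/\norm{fV}_1^2$ and combining $\frac{d}{dt}\norm{\tilde g(t)}_2^2=-2(\tilde A^\alpha \tilde g(t),\tilde g(t))$ with the Nash inequality applied to $\tilde g(t)$, the bound on $\norm{V\tilde g(t)}_1$, and the monotonicity of $B$, a direct calculation produces the differential inequality
\[
\psi'(t)\le -2\gamma\,\psi(t)\bigl[B(\gamma\psi(t))\bigr]^\alpha.
\]
Dividing by the right-hand side rewrites this as $\frac{d}{dt}U(\psi(t))\ge 2$, so $U(\psi(t))\ge U(\psi(0))+2t\ge 2t$. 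Since $U$ is strictly decreasing on $(0,\infty)$, one obtains $\psi(t)\le U^{-1}(2t)=K(2t)^2$ when $2t<U(0)$, while for $2t\ge U(0)$ this forces $\psi(t)=0$, matching the definition \eqref{def: function K}. In either case $\norm{\tilde T_\alpha(t)f}_2\le K(2t)\norm{fV}_1$.

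To pass from $\tilde T_\alpha$ to $T_\alpha$, let $\nu_f:=d(E_\lambda f,f)$ denote the spectral measure of $A$ associated with $f$. By the functional calculus,
\[
(T_\alpha(2t)f,f)=\int_0^\infty e^{-2t\lambda^\alpha}\,d\nu_f(\lambda),\qquad (\tilde T_\alpha(2t)f,f)=\int_0^\infty e^{-2t(\lambda+c)^\alpha}\,d\nu_f(\lambda),
\]
and the pointwise subadditivity $(\lambda+c)^\alpha\le \lambda^\alpha+c^\alpha$ for $\lambda,c\ge 0$ and $0<\alpha<1$ gives $e^{-2t\lambda^\alpha}\le e^{2tc^\alpha}e^{-2t(\lambda+c)^\alpha}$. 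Integrating and using $\norm{S(t)f}_2^2=(S(2t)f,f)$, valid for any symmetric semigroup $S$, yields $\norm{T_\alpha(t)f}_2\le e^{tc^\alpha}\norm{\tilde T_\alpha(t)f}_2$; combined with the previous bound this delivers \eqref{eq2: norm semigroup for 0<alpha<1}.

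The main technical point I anticipate is justifying the integration by parts $\int V\tilde A^\alpha\tilde g(t)\,d\mu=\int(\tilde A^\alpha V)\tilde g(t)\,d\mu$ used in the second input (needing $V,\tilde g(t)\in D(\tilde A^\alpha)$ together with sufficient integrability for the pairing), along with the bookkeeping at the endpoint $2t=U(0)$ of the ODE integration. Once these are secured, the Nash input from Theorem \ref{Thm: fractional Nash for 0<alpha<1} and the pointwise spectral comparison make the argument essentially automatic.
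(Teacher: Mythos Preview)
Your proof is correct and follows the same two-step strategy as the paper: pass to $\tilde A=A+c$ so that $V$ has zero Lyapunov constant, obtain the $L^2$-decay bound for $\tilde T_\alpha$, and then transfer back via a spectral comparison. The paper packages the first step as a black-box citation of \cite[Theorem~3.5]{Bakry} (after invoking Theorem~\ref{Thm: fractional Nash for 0<alpha<1} and Remark~\ref{Rem: Lyapunov functions for powers} exactly as you do), whereas you reprove that ODE argument inline; the content is identical. For the comparison step the paper takes a slightly different route: it invokes \cite[Proposition~5.1.14]{Martinez} to write $(A+c)^\alpha=A^\alpha+T_c$ with $T_c$ bounded self-adjoint, bounds $\|T_c\|\le c^\alpha$ via Lemma~\ref{Lem: useful inequality}, and then splits $e^{-tA^\alpha}=e^{tT_c}e^{-t(A+c)^\alpha}$ using commutativity. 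Your direct spectral-calculus argument, using the pointwise subadditivity $(\lambda+c)^\alpha\le\lambda^\alpha+c^\alpha$ together with $\|S(t)f\|_2^2=(S(2t)f,f)$, achieves the same inequality without the external reference and is arguably the cleaner way to do it.
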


\begin{proof}
We first prove the theorem assuming that $c=0$. Then $V$ is a Lyapunov function for $-A^\alpha$ by Remark \ref{Rem: Lyapunov functions for powers} and $\phi(x)=\gamma x [B(\gamma x)]^\alpha$ is a positive function  defined on $(0,+\infty)$ with $\phi(x)/x$ non decreasing and $\int^\infty 1/\phi(x)\, dx<\infty$, by \eqref{eq1: norm semigroup for 0<alpha<1}.
Moreover, by Theorem \ref{Thm: fractional Nash for 0<alpha<1}, the operator $A^\alpha$ satisfies a weighted Nash inequality of this form
\[
\phi\left( \frac{\norm{f}_2^2}{\norm{fV}_1^2}\right) \leq \frac{(A^\alpha f,f)}{\norm{fV}_1^2},
\]
for all $f\in D(A^\alpha)$ such that $\norm{fV}_1<\infty$.
Hence, by \cite[Theorem 3.5]{Bakry} we infer that
\begin{equation}\label{eq5: norm semigroup for 0<alpha<1}
\norm{T_\alpha(t)f}_2\leq K(2t) \norm{fV}_1,
\end{equation}
for all $t>0$ and all functions $f\in L^2(\mu)$.

We now turn to the general case assuming that $c\geq 0$. Since $V$ is a Lyapunov function for the operator $-(A+c)$ with zero Lyapunov constant, from what we proved above we have
\begin{equation}\label{eq3: norm semigroup for 0<alpha<1}
\norm{e^{-t(A+c)^\alpha}f}_2\leq K(2t) \norm{fV}_1,
\end{equation}
for all $t>0$ and all functions $f\in L^2(\mu)$, where $e^{-t(A+c)^\alpha}$ is the semigroup generated by $-(A+c)^\alpha$.
We now prove that 
\begin{equation}\label{eq4: norm semigroup for 0<alpha<1}
\norm{T_\alpha(t)f}_2\leq e^{c^\alpha t}\norm{e^{-t(A+c)^\alpha}f}_2,
\end{equation}
for all $t>0$ and for all $f\in L^2(\mu)$.  
Since $0<\alpha<1$, we apply \cite[Proposition 5.1.14]{Martinez} to infer that there is a bounded self-adjoint operator $T_c$ such that
\[
(A+c)^\alpha=A^\alpha+T_c.
\]
Moreover, Lemma \ref{Lem: useful inequality} implies that
\[
(T_c f,f)=((A+c)^\alpha f,f)-(A^\alpha f,f)\leq c^\alpha \norm{f}_2^2,
\]
for all $f\in D(A^\alpha)$. Hence, we have that $\norm{T_c}\leq c^\alpha$, from which we get that
\[
\norm{e^{tT_c}f}_2\leq e^{c^\alpha t} \norm{f}_2,
\]
for all $t>0$ and for all $f\in L^2(\mu)$, where $e^{tT_c}$ is the semigroup generated by $T_c$.
We finally get \eqref{eq4: norm semigroup for 0<alpha<1} as follows
\[
\norm{T_\alpha(t)f}_2=\norm{e^{t(T_c-(A+c)^\alpha)}f}_2\leq e^{c^\alpha t} \norm{e^{-t(A+c)^\alpha}f}_2,
\]
for all $t>0$ and for all $f\in L^2(\mu)$.  
Combining inequalities \eqref{eq3: norm semigroup for 0<alpha<1} and \eqref{eq4: norm semigroup for 0<alpha<1} we obtain the statement.
\end{proof}

\begin{remark} \label{rem: different U}
Suppose that condition \eqref{eq1: norm semigroup for 0<alpha<1} holds for all $x>M$, for some $M\geq 0$. Then in the proof of Theorem \ref{Thm: norm semigroup for 0<alpha<1} we can still apply \cite[Theorem 3.5]{Bakry} with the function $\phi$ defined on $(M,+\infty)$. As a consequence we obtain that inequality \eqref{eq5: norm semigroup for 0<alpha<1} is satisfied with the function $K$ defined by 
\begin{equation*}
\begin{cases}
\sqrt{U^{-1}(x)} & \text{if } 0<x<U(M),\\
\sqrt{M} & \text{if } x\geq U(M),
\end{cases}
\end{equation*}
and $U$ the function on $(M,+\infty)$ given by
\[
U(x)=\int_x^\infty \frac{1}{\gamma u [B(\gamma u)]^\alpha}\, du.
\]
The rest of the proof carries over verbatim.
\end{remark}

\begin{theorem}\label{Thm: fractional kernel estimate for 0<alpha<1}
Under the same assumptions of Theorem \ref{Thm: norm semigroup for 0<alpha<1}, we have that the semigroup $T_\alpha(\cdot)$ generated by $-A^\alpha$ has a kernel $p_\alpha$ with respect to $\mu$ which satisfies
\begin{equation}\label{eq: Thm: fractional kernel estimate for 0<alpha<1}
p_\alpha (t,x,y)\leq K(t)^2 e^{c^\alpha t} V(x)V(y),
\end{equation}
for all $t>0$ and $\mu \otimes \mu$ almost every $x,y\in X$.
\end{theorem}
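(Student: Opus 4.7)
The plan is to apply the classical Davies-style dualization argument---essentially the one used in \cite[Corollary 3.7]{Bakry}---to the fractional semigroup $T_\alpha(\cdot)$, taking as input the weighted $L^1 \to L^2$ bound supplied by Theorem \ref{Thm: norm semigroup for 0<alpha<1}.

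First I would dualize the estimate $\norm{T_\alpha(t)f}_2 \leq K(2t) e^{c^\alpha t} \norm{fV}_1$. Since $T(\cdot)$ is symmetric on $L^2(\mu)$, so is $T_\alpha(\cdot)$, it being defined through the spectral calculus of $A$. Viewing $T_\alpha(t)$ as a bounded operator $L^1(V\,d\mu) \to L^2(\mu)$ of norm at most $K(2t) e^{c^\alpha t}$, symmetry identifies its Banach-space adjoint with $V^{-1} T_\alpha(t) \colon L^2(\mu) \to L^\infty(\mu)$. This yields the dual pointwise bound
\[
|T_\alpha(t) g(x)| \leq K(2t) e^{c^\alpha t} V(x) \norm{g}_2
\]
for $\mu$-a.e.\ $x$ and every $g \in L^2(\mu)$.

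Next I would use the semigroup splitting $T_\alpha(t) = T_\alpha(t/2) T_\alpha(t/2)$: for $f$ with $fV \in L^1(\mu)$, set $g = T_\alpha(t/2)f$, apply the dual bound at time $t/2$ to estimate $|T_\alpha(t/2) g(x)| \leq K(t) e^{c^\alpha t/2} V(x) \norm{g}_2$, and bound $\norm{g}_2$ by the original estimate at time $t/2$. Multiplying produces the sought pointwise inequality
\[
|T_\alpha(t) f(x)| \leq K(t)^2 e^{c^\alpha t} V(x) \int_X V(y) |f(y)| \, d\mu(y)
\]
for $\mu$-a.e.\ $x$.

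The last task is to convert this into an honest kernel bound. The inequality above says that, for a.e.\ $x$, the linear functional $f \mapsto T_\alpha(t)f(x)$ is continuous on $L^1(V\,d\mu)$ with norm at most $K(t)^2 e^{c^\alpha t} V(x)$, so Riesz representation on $L^1(V\,d\mu)$ yields, for a.e.\ $x$, a measurable function $y \mapsto p_\alpha(t,x,y)$ with $0 \leq p_\alpha(t,x,y) \leq K(t)^2 e^{c^\alpha t} V(x) V(y)$ representing the semigroup (nonnegativity comes from positivity preservation of $T_\alpha(\cdot)$, inherited via \eqref{fractional semigroup}). The only delicate point is upgrading this $x$-pointwise construction to a jointly measurable kernel on $X \times X$ and verifying the symmetry $p_\alpha(t,x,y) = p_\alpha(t,y,x)$; this is carried out exactly as in the proof of \cite[Corollary 3.7]{Bakry}, whose hypotheses are met here since Theorem \ref{Thm: fractional Nash for 0<alpha<1} provides the weighted Nash inequality for $A^\alpha$ (the Lyapunov constant $c^\alpha$ being absorbed in the exponential factor $e^{c^\alpha t}$).
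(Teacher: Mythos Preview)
Your proposal is correct and follows essentially the same route as the paper: both start from the bound $\norm{T_\alpha(t)f}_2 \leq K(2t)e^{c^\alpha t}\norm{fV}_1$ of Theorem~\ref{Thm: norm semigroup for 0<alpha<1}, dualize via symmetry, and compose at half-times. The paper simply packages this dualization-and-composition step as a direct citation of \cite[Proposition~3.1]{Bakry} (rather than Corollary~3.7), whereas you have written out what that proposition does.
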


\begin{proof}
By Theorem \ref{Thm: norm semigroup for 0<alpha<1} we infer that 
\[
\norm{T_\alpha(t)f}_2\leq K(2t) e^{c^\alpha t} \norm{fV}_1,
\]
for all $t>0$ and for all $f\in L^2(\mu)$. Hence, applying \cite[Proposition 3.1]{Bakry}, we have that the operator $T_\alpha(t)\circ T_\alpha(t)$, that is $T_\alpha(2t)$, is represented by a kernel with respect to $\mu$ satisfying
\[
p_\alpha (2t,x,y)\leq K(2t)^2 e^{2c^\alpha t} V(x)V(y),
\]
for all $t>0$ and $\mu \otimes \mu$ almost every $x,y\in X$.
\end{proof}

We now turn to the case $\alpha\geq 1$.

\begin{theorem}\label{Thm: norm semigroup for alpha>1}
Let $(X,\mu)$ be a measure space with $\sigma$-finite measure $\mu$ and let $T(\cdot)$ be a symmetric Markov semigroup acting on $L^2(\mu)$ with generator $-A$. Fix $\alpha \geq 1$. Assume that there exist a Lyapunov function $V$ for the operator $-A^\alpha $ with Lyapunov constant $c_\alpha\geq 0$ and a non-decreasing function $B:[0,+\infty)\to [0,+\infty)$ such that $A$ satisfies the following weighted Nash inequality
\begin{equation}
\norm{f}_2^2 B\left( \frac{\norm{f}_2^2}{\norm{fV}_1^2}\right) \leq (Af,f) + c\norm{f}_2^2
\end{equation}
for some $c\geq 0$ and for all $f\in D(A)$ with $\norm{fV}_1<\infty$.
Moreover, assume that 
\begin{equation}\label{eq1: norm semigroup for alpha>1}
U(x):=\int_x^\infty \frac{2^{\alpha -1}}{ u [B( u)]^\alpha}\, du<\infty
\end{equation}
for all $x>0$.
Then the semigroup $T_\alpha(\cdot)$ generated by $-A^\alpha$ satisfies the following inequality
\begin{equation}\label{eq2: norm semigroup for alpha>1}
\norm{T_\alpha(t)f}_2\leq \left\{\begin{array}{ll}
K(2t) e^{c_\alpha t} \norm{fV}_1,\,\hbox{\ if }c_\alpha \ge c^\alpha ,\\
K(2t)e^{c^\alpha t} \norm{fV}_1,\,\hbox{\ if }c_\alpha < c^\alpha 
\end{array}
\right.
\end{equation}
for all $t>0$ and all functions $f\in L^2(\mu)$.
Here, the function $K$ is defined by
\begin{equation}
K(x):=\begin{cases}
\sqrt{U^{-1}(x)} & \text{if } 0<x<U(0),\\
0 & \text{if } x\geq U(0).
\end{cases}
\end{equation}
\end{theorem}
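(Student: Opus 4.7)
The plan is to mirror the proof of Theorem \ref{Thm: norm semigroup for 0<alpha<1}, working with the shifted operator $A^\alpha + c^\alpha I$ and invoking \cite[Theorem 3.5]{Bakry}. The case $\alpha \geq 1$ is in fact noticeably simpler because Remark \ref{rem: fractional Nash inequality for alpha>1} already hands us a weighted Nash inequality for $A^\alpha$:
\[
\norm{f}_2^2 \left[B\left(\frac{\norm{f}_2^2}{\norm{fV}_1^2}\right)\right]^\alpha \leq 2^{\alpha-1}\left[(A^\alpha f,f)+c^\alpha \norm{f}_2^2\right],
\]
which can be rewritten as
\[
2^{1-\alpha}\norm{f}_2^2 \left[B\left(\frac{\norm{f}_2^2}{\norm{fV}_1^2}\right)\right]^\alpha \leq ((A^\alpha + c^\alpha) f,f).
\]
This is a weighted Nash inequality of the type \eqref{weighted-Nash Bakry} for the non-negative self-adjoint operator $A^\alpha + c^\alpha$ with rate function $\phi(x):=2^{1-\alpha} x[B(x)]^\alpha$. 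Since $B$ is non-decreasing and $\alpha\ge 1$, the map $x\mapsto \phi(x)/x=2^{1-\alpha}[B(x)]^\alpha$ is non-decreasing, and hypothesis \eqref{eq1: norm semigroup for alpha>1} is precisely $\int^\infty du/\phi(u)=U(0)<\infty$.

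Next I verify the Lyapunov condition for the shifted operator. From $-A^\alpha V \leq c_\alpha V$ it follows that
\[
-(A^\alpha + c^\alpha)V \leq (c_\alpha - c^\alpha)V \leq d\,V, \qquad d:= \max\{c_\alpha - c^\alpha,\, 0\} \geq 0,
\]
so $V$ is a Lyapunov function for $-(A^\alpha + c^\alpha)$ with Lyapunov constant $d$. Applying \cite[Theorem 3.5]{Bakry} to $A^\alpha + c^\alpha$ with this Lyapunov function and rate $\phi$ then gives
\[
\norm{e^{-t(A^\alpha + c^\alpha)}f}_2 \leq K(2t)\,e^{d t}\,\norm{fV}_1
\]
for all $t>0$ and $f \in L^2(\mu)$, with $K$ as defined in the statement (the function $U$ of the statement is precisely $\int_\cdot^\infty du/\phi(u)$).

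Finally, because $A^\alpha$ and $c^\alpha I$ commute we have the trivial identity $T_\alpha(t)=e^{c^\alpha t}e^{-t(A^\alpha+c^\alpha)}$, so
\[
\norm{T_\alpha(t)f}_2 \leq K(2t)\,e^{(d+c^\alpha)t}\,\norm{fV}_1 = K(2t)\,e^{\max(c_\alpha,\,c^\alpha)\,t}\,\norm{fV}_1,
\]
which is exactly the two-case bound in \eqref{eq2: norm semigroup for alpha>1}. The substantive step, namely passing from the Nash inequality for $A$ to one for $A^\alpha$, is already packaged in Remark \ref{rem: fractional Nash inequality for alpha>1}; the rest is bookkeeping. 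In particular, no analogue of the delicate subordination comparison between $(A+c)^\alpha$ and $A^\alpha + c^\alpha$ used in Theorem \ref{Thm: norm semigroup for 0<alpha<1} is needed here, because the shift is applied after taking the fractional power. The only real care point is tracking the two exponents $c_\alpha$ and $c^\alpha$ to obtain the stated case split.
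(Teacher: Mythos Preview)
Your proof is correct and follows essentially the same route as the paper: derive the Nash inequality for $A^\alpha+c^\alpha$ from Remark~\ref{rem: fractional Nash inequality for alpha>1} with rate $\phi(x)=2^{1-\alpha}x[B(x)]^\alpha$, check the Lyapunov condition for the shifted operator (with constant $\max\{c_\alpha-c^\alpha,0\}$), apply \cite[Theorem~3.5]{Bakry}, and undo the shift via $T_\alpha(t)=e^{c^\alpha t}e^{-t(A^\alpha+c^\alpha)}$. One small slip: hypothesis~\eqref{eq1: norm semigroup for alpha>1} says $U(x)<\infty$ for all $x>0$, not necessarily $U(0)<\infty$; the latter may well be infinite (and the definition of $K$ accounts for this).
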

\begin{proof}
We apply Theorem \ref{Thm: fractional Nash for alpha>1} and Remark \ref{rem: fractional Nash inequality for alpha>1} to infer that the operator $A^\alpha+ c^\alpha$ satisfies a weighted Nash inequality of this form
\[
\phi\left( \frac{\norm{f}_2^2}{\norm{fV}_1^2}\right) \leq \frac{((A^\alpha + c^\alpha) f,f)}{\norm{fV}_1^2},
\]
for all $f\in D(A^\alpha)$ such that $\norm{fV}_1<\infty$, where the function $\phi$ is defined by $\phi(x)=2^{1-\alpha}x [B(x)]^\alpha$ for $x>0$. 
We observe that $\phi(x)/x$ is non decreasing and that \eqref{eq1: norm semigroup for alpha>1} implies that $\int^\infty 1/\phi(x)\, dx<\infty$.
Furthermore, since $V$ is a Lyapunov function for the operator $-A^\alpha$ with Lyapunov constant $c_\alpha\geq 0$, then $V$ is a Lyapunov function for $-(A^\alpha+ c^\alpha)$ with Lyapunov constant $c_\alpha-c^\alpha$ if $c_\alpha \ge c^\alpha$ and one takes $0$ as a Lyapunov constant when $c_\alpha <c^\alpha$.
Hence, we invoke \cite[Theorem 3.5]{Bakry} to deduce, if we denote by $e^{-t(A^\alpha+c^\alpha)}$ the semigroup generated by $-(A^\alpha+ c^\alpha)$, then
\begin{equation*}
\norm{e^{-t(A^\alpha+c^\alpha)}f}_2\leq \left\{\begin{array}{ll}
K(2t) e^{(c_\alpha-c^\alpha)t}\norm{fV}_1,\,\,\hbox{\ if }c_\alpha \ge c^\alpha ,\\
K(2t)\norm{fV}_1,\,\quad \quad \quad \quad\hbox{\ if }c_\alpha < c^\alpha
\end{array}
\right.
\end{equation*}
for all $t>0$ and all functions $f\in L^2(\mu)$.
Finally, since $e^{-t(A^\alpha+c^\alpha)}=e^{-c^\alpha t}\, T_\alpha(t)$ for all $t>0$, we obtain inequality \eqref{eq2: norm semigroup for alpha>1}.
\end{proof}

\begin{theorem}\label{Thm: fractional kernel estimate for alpha>1}
Under the same assumptions of Theorem \ref{Thm: norm semigroup for alpha>1}, we have that the semigroup $T_\alpha(\cdot)$ generated by $-A^\alpha$ has a kernel $p_\alpha$ with respect to $\mu$ which satisfies
\begin{equation}
p_\alpha (t,x,y)\leq \left\{\begin{array}{ll}
K(t)^2 e^{c_\alpha t} V(x)V(y),\,\,\hbox{\ if }c_\alpha \ge c^\alpha ,\\
K(t)^2 e^{c^\alpha t} V(x)V(y),\,\,\hbox{\ if }c_\alpha < c^\alpha 
\end{array}
\right.
\end{equation}
for all $t>0$ and $\mu \otimes \mu$ almost every $x,y\in X$.
\end{theorem}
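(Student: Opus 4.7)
The plan is to replicate the proof of Theorem \ref{Thm: fractional kernel estimate for 0<alpha<1} verbatim, with the upper bound on $\|T_\alpha(t)f\|_2$ replaced by the one provided by Theorem \ref{Thm: norm semigroup for alpha>1}. The mechanism is to combine the symmetry of $T_\alpha(\cdot)$ with the composition identity $T_\alpha(2t) = T_\alpha(t)\circ T_\alpha(t)$ and invoke \cite[Proposition 3.1]{Bakry}, which turns an $L^1(V\,d\mu)\to L^2(\mu)$ continuity estimate into a pointwise kernel bound.

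Concretely, I would first call upon Theorem \ref{Thm: norm semigroup for alpha>1} to obtain
\[
\|T_\alpha(t)f\|_2 \leq K(2t)\, e^{\delta t}\, \|fV\|_1
\]
for every $t>0$ and every $f\in L^2(\mu)$, where $\delta = c_\alpha$ when $c_\alpha\ge c^\alpha$ and $\delta = c^\alpha$ when $c_\alpha < c^\alpha$. Since $A^\alpha$ is non-negative self-adjoint, $T_\alpha(\cdot)$ is a symmetric semigroup on $L^2(\mu)$; moreover, by the subordination formula \eqref{fractional semigroup} together with the positivity of $T(\cdot)$, $T_\alpha(\cdot)$ is positivity preserving. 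These are precisely the structural properties required for the application of \cite[Proposition 3.1]{Bakry}.

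Applying that proposition to $T_\alpha(2t) = T_\alpha(t)\circ T_\alpha(t)$ then yields a non-negative kernel $p_\alpha(2t,\cdot,\cdot)$ with respect to $\mu$ satisfying
\[
p_\alpha(2t,x,y) \leq K(2t)^2\, e^{2\delta t}\, V(x)\, V(y)
\]
for $\mu\otimes\mu$-almost every $(x,y)\in X\times X$. Relabeling $2t$ as $t$ finally produces the two-case estimate in the statement.

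The argument is essentially a direct transfer of the $0<\alpha<1$ case, so I do not anticipate any genuinely new technical obstacle; the only point that requires attention is the bookkeeping for the exponential factor $\delta$, which arises from the split hypothesis $c_\alpha \geq c^\alpha$ versus $c_\alpha < c^\alpha$ in Theorem \ref{Thm: norm semigroup for alpha>1} and propagates unchanged through the symmetrization step.
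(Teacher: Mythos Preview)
Your proposal is correct and mirrors the paper's own proof exactly: the paper simply states that the argument of Theorem \ref{Thm: fractional kernel estimate for 0<alpha<1} carries over, invoking Theorem \ref{Thm: norm semigroup for alpha>1} in place of Theorem \ref{Thm: norm semigroup for 0<alpha<1} and tracking the two-case exponential constant, which is precisely what you do. One small caveat: the subordination formula \eqref{fractional semigroup} is only available for $0<\alpha<1$, so your positivity remark is misplaced here (indeed $T_\alpha(\cdot)$ need not be positivity preserving when $\alpha>1$), but this is harmless since \cite[Proposition 3.1]{Bakry} relies only on the symmetry of the semigroup together with the norm bound.
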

\begin{proof}
The same proof of Theorem \ref{Thm: fractional kernel estimate for 0<alpha<1} works if we apply Theorem \ref{Thm: norm semigroup for alpha>1} instead of Theorem \ref{Thm: norm semigroup for 0<alpha<1} and if we replace $c^\alpha$ with $c_\alpha$.
\end{proof}

\section{Some applications}\label{Section: applications}

In this section we aim to apply Theorem \ref{Thm: fractional kernel estimate for 0<alpha<1} to the operator
\begin{equation}\label{example: Kolmogorov operator}
A=-\Delta -\nabla \log\rho\cdot\nabla
\end{equation}
on the space $L^2(\mu)$ with the measure $d\mu(x)=\rho(x)dx$, where $\rho$ is a positive $C^2$-function on $\R^d$. 
By \cite[Corollary 3.7]{Albanese-Lorenzi-Mangino} we have that $
-\overline{A}|_{C_c^\infty(\R^d)}$ on $L^2(\mu)$ generates a symmetric Markov semigroup $T(\cdot)$.
We still denote by $-A$ its generator. 

We first consider the weight function 
\[
V=\rho^{-1/2}.
\]
Even if $V$ is never in $L^2(\mu)$, it will play the role of Lyapunov function.
As proved in \cite[Theorem 4.1]{Bakry}, the classical Nash inequality in $\R^d$
\[
\left(\int_{\R^d} |f|^2 dx\right)^\frac{2+d}{4}\leq C_d \left(\int_{\R^d} |f| dx\right) \left(\int_{\R^d} |\nabla f|^2 dx\right)^\frac{d}{4}
\]
is equivalent to
\[
\norm{f}_2^{2+\frac{4}{d}}\leq C_d^\frac{4}{d} \norm{fV}_1^\frac{4}{d} \left((Af,f)+\int_{\R^d}\frac{-AV}{V}f^2\, d\mu\right)
\]
for all smooth functions $f$ on $\R^d$ with compact support.
Moreover, if $-AV\leq cV$ for some $c\in\R$, then
\[
\norm{f}_2^{2+\frac{4}{d}}\leq C_d^\frac{4}{d} \norm{fV}_1^\frac{4}{d} \left((Af,f)+c\norm{f}_2^2\right).
\]
In other words, $A$ satisfies a weighted Nash inequality such as \eqref{intro: fractional Nash ineq for 0<alpha<1} with $B(x)=x^\frac{2}{d}$.
Consequently, under suitable assumptions on the function $\rho$, in the next result we apply Theorem \ref{Thm: fractional kernel estimate for 0<alpha<1} to estimate the kernel associated with the fractional operator $A^\alpha$ for $0<\alpha<1$.

\begin{corollary}\label{Cor: fractional kernel estimate example 1}
With the above notation, let $0<\alpha<1$ and assume that $\mu$ is a probability measure. 
Moreover, assume that
\begin{enumerate}
\item $V\in L^1(\mu)$, $AV\in L^1(\mu)$ and $-AV\leq cV$ with $c\geq 0$;
\item the Hessian of $\log \rho$ is uniformly bounded from above on $\R^d$;
\item $\sup_{|x|=r} \rho(x)^{1/2} r^{d-1}\to 0$ and $\sup_{|x|=r} |\nabla\rho(x)| \rho(x)^{-1/2} r^{d-1}\to 0$ as $r\to\infty$.
\end{enumerate}
Then there exists a constant $C>0$ such that the semigroup $T_\alpha(\cdot)$ generated by $-A^\alpha$ has a kernel $p_\alpha(t,x,y)$ which satisfies
\begin{equation}\label{eq: Cor: fractional kernel estimate example 1}
p_\alpha(t,x,y) \leq C t^{-\frac{d}{2\alpha}} e^{c^\alpha t} V(x) V(y)
\end{equation}
for all $t>0$ and $x,y\in\R^d$.
\end{corollary}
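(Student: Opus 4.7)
The plan is to verify the hypotheses of Theorem \ref{Thm: fractional kernel estimate for 0<alpha<1} with $V = \rho^{-1/2}$ and $B(x) = x^{2/d}$, and then compute the function $U$ explicitly to extract the power law.

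First, I would record the weighted Nash inequality for $A$ with these data. As the paragraph preceding the corollary recalls, the classical Nash inequality on $\R^d$ is equivalent, via the identity $\int |\nabla f|^2\, dx = (Af,f) + \int \frac{-AV}{V} f^2\, d\mu$, to
\[
\norm{f}_2^{2+4/d} \leq C_d^{4/d} \norm{fV}_1^{4/d}\bigl((Af,f)+c\norm{f}_2^2\bigr),
\]
which is exactly \eqref{intro: fractional Nash ineq for 0<alpha<1} with $B(x)=x^{2/d}$ and Lyapunov constant $c\ge 0$ supplied by assumption (1).

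Next, I would justify the application of Theorem \ref{Thm: fractional kernel estimate for 0<alpha<1} despite the fact that $V=\rho^{-1/2}$ is not in $L^2(\mu)$. By Remark \ref{rem: assumption V in the domain of A}, the only place where $V\in D(A)$ is used in the chain of results leading to Theorem \ref{Thm: fractional kernel estimate for 0<alpha<1} is the integration-by-parts identity \eqref{eq7: case alpha=1/2}, i.e.\ $\int (A^{1/2}P(t)f)\,V\,d\mu = \int (P(t)f)\,A^{1/2}V\,d\mu$. I would verify this directly by a truncation argument: replace $V$ by $V\chi_{B_R}$, integrate by parts on $B_R$ using the essential self-adjointness of $A$ on $C_c^\infty(\R^d)$ (provided by the Hessian bound (2) through standard criteria for Kolmogorov operators, as in Albanese--Lorenzi--Mangino), and let $R\to\infty$. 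The decay assumptions (3) are precisely what make the boundary integrals on $\partial B_R$ vanish in the limit, while $V, AV\in L^1(\mu)$ from (1) control the interior integrals by dominated convergence.

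Once the theorem is applicable, the last step is just a computation: with $B(u)=u^{2/d}$,
\[
U(x) = \int_x^\infty \frac{du}{\gamma u(\gamma u)^{2\alpha/d}} = \frac{d}{2\alpha\,\gamma^{1+2\alpha/d}}\, x^{-2\alpha/d},
\]
which is finite for every $x>0$, so condition \eqref{eq1: norm semigroup for 0<alpha<1} holds. Inverting yields $K(t)^2 = U^{-1}(t) = C\, t^{-d/(2\alpha)}$ for a constant $C=C(d,\alpha,\gamma)$, and substituting into \eqref{eq: Thm: fractional kernel estimate for 0<alpha<1} delivers \eqref{eq: Cor: fractional kernel estimate example 1}. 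The main obstacle is therefore the second step: rigorously justifying \eqref{eq7: case alpha=1/2} for $V\notin L^2(\mu)$ is exactly what forces the technical assumptions (1)--(3), each of which plays a specific role in the truncation-and-boundary-term argument.
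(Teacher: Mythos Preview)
Your overall strategy and the final computation of $U$ and $K$ are correct and match the paper. The weak point is the middle step, where your sketch of the integration-by-parts justification has two defects.

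First, the identity you need involves $A^{1/2}$ (and, through the iteration in Proposition~\ref{Prop: iteration}, every $A^{1/2^n}$), which is a \emph{nonlocal} operator: there is no divergence-theorem computation ``on $B_R$'' for $A^{1/2}$, and $V\chi_{B_R}$ is not in $D(A^{1/2})$ (it has a jump on $\partial B_R$). The paper resolves this by a two-step procedure you do not mention. It first proves the integration by parts for the \emph{local} operator $A$,
\[
\int_{\R^d} V(-AT(t)f)\,d\mu = \int_{\R^d} (-AV)\,T(t)f\,d\mu,
\]
via a genuine divergence-theorem calculation on $B_r$ with explicit boundary terms, and only then transfers the identity to $A^\beta$ for every $\beta\in(0,1)$ by writing $A^\beta$ through the Balakrishnan formula \eqref{Balakrishnan formula} as an integral of resolvents, hence of $T(\cdot)$, and inserting a smooth cutoff $\varphi$ equal to $1$ on $\mathrm{supp}\,f$ so that $\varphi V\in C_c^2(\R^d)\subset D(A)$.

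Second, you misread the role of hypothesis~(2). Essential self-adjointness of $A$ on $C_c^\infty(\R^d)$ is supplied by \cite{Albanese-Lorenzi-Mangino} and does not use the Hessian bound. What (2) actually buys is the Bakry--\'Emery curvature condition $CD(-\nu,\infty)$, hence the pointwise gradient estimate $|\nabla T(t)f|\le e^{\nu t}T(t)|\nabla f|\le e^{\nu t}\|\nabla f\|_\infty$. This is exactly what is needed to kill the boundary term $\int_{\partial B_r} V\,\nabla T(t)f\cdot\vec\eta\,\rho\,d\sigma$ as $r\to\infty$: condition~(3) alone controls $V\rho\, r^{d-1}=\rho^{1/2}r^{d-1}$ on $\partial B_r$, but without a uniform bound on $\nabla T(t)f$ that is not enough. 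The other boundary term, involving $T(t)f\,\nabla V\cdot\vec\eta\,\rho$, is handled by the $L^\infty$-contractivity of $T(t)$ together with the second limit in~(3).
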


\begin{proof}
The idea is to apply Theorem \ref{Thm: fractional kernel estimate for 0<alpha<1} with $B(x)=x^\frac{2}{d}$.
We observe that $V=\rho^{-1/2}\notin L^2(\mu)$, so $V\notin D(A)$ and it is not possible to directly apply Theorem \ref{Thm: fractional kernel estimate for 0<alpha<1}. 
However, as pointed out in Remark \ref{rem: assumption V in the domain of A}, we only need to justify inequality \eqref{eq1: case alpha=1/2}. 

Let $t>0$ and $f$ be a smooth function with compact support.
As in the proof of \cite[Corollary 4.2]{Bakry}, we first prove that the following integration by parts holds for the operator $A$
\begin{equation}\label{eq3: Cor: fractional kernel estimate example 1}
\int_{\R^d} V(-AT(t)f)\, d\mu=\int_{\R^d} (-AV)T(t)f\, d\mu
\end{equation}
for any $t>0$ and any smooth function $f$ with compact support.
Let $r>0$, $B_r$ be the open ball of $\R^d$ of radius $r$ and center $0$ and $\vec{\eta}$ be its outward unit normal vector.
Integrating by parts on $B_r$ we obtain
\begin{align*}
\int_{B_r} V(-AT(t)f)\, d\mu
= &\int_{B_r} V\rho \Delta T(t)f \, dx + \int_{B_r} V \nabla\rho \cdot \nabla T(t)f\, dx\\
= &-\int_{B_r} \nabla(V\rho)\cdot \nabla T(t)f \, dx + \int_{\partial B_r} V \nabla T(t)f \cdot \vec{\eta}\rho\, d\sigma \\
&- \int_{B_r} \mathrm{div}(V \nabla\rho) T(t)f\, dx + \int_{\partial B_r} V T(t)f \nabla \rho\cdot\vec{\eta}\, d\sigma.
\end{align*}
Integrating by parts again the first integral in the right hand side of the previous expression we get
\begin{align*}
\int_{B_r} V(-AT(t)f)\, d\mu 
= &\int_{B_r} [\Delta (V\rho)-\mathrm{div}(V \nabla\rho)] T(t)f\, dx +\int_{\partial B_r} T(t)f[V\nabla\rho - \nabla (V\rho)]\cdot \vec{\eta}\, d\sigma\\
&+ \int_{\partial B_r} V \nabla T(t)f \cdot \vec{\eta}\rho\, d\sigma\\
=& \int_{B_r} (-AV)T(t)f\, d\mu -\int_{\partial B_r} T(t)f \nabla V\cdot \vec{\eta}\rho\, d\sigma+ \int_{\partial B_r} V \nabla T(t)f \cdot \vec{\eta}\rho\, d\sigma.
\end{align*}
We denote by $S^{d-1}$ the unit ball in $\R^{d-1}$ and performing a change of variables we find that
\begin{align}\label{eq2: Cor: fractional kernel estimate example 1}
\int_{B_r} V(-AT(t)f)\, d\mu 
= &\int_{B_r} (-AV)T(t)f\, d\mu -\int_{S^{d-1}} T(t)f(r\omega)\nabla V(r\omega)\cdot \vec{\eta}\rho(r\omega)r^{d-1}\, d\omega\notag\\
&+ \int_{S^{d-1}} V(r\omega) \nabla T(t)f(r\omega) \cdot \vec{\eta}\rho(r\omega)r^{d-1}\, d\omega\notag \\
=& \int_{B_r} (-AV)T(t)f\, d\mu -I_r+J_r,
\end{align}
where 
\begin{align}
I_r&=\int_{S^{d-1}} T(t)f(r\omega)\nabla V(r\omega)\cdot \vec{\eta}\rho(r\omega)r^{d-1}\, d\omega,\label{eq5: Cor: fractional kernel estimate example 1}\\
J_r&=\int_{S^{d-1}} V(r\omega) \nabla T(t)f(r\omega) \cdot \vec{\eta}\rho(r\omega)r^{d-1}\, d\omega.\label{eq6: Cor: fractional kernel estimate example 1}
\end{align}
We have
\[
|I_r|\leq \int_{S^{d-1}}|T(t)f(r\omega)| |\nabla V(r\omega)|\rho(r\omega)r^{d-1}\, d\omega
\leq \frac{1}{2} \norm{f}_\infty \int_{S^{d-1}} |\nabla \rho(r\omega)| \rho(r\omega)^{-1/2} r^{d-1}\, d\omega.
\]
Then by assumption (c) we derive that $I_r\to 0$ as $r\to\infty$. 
Moreover, since the Hessian of $\log\rho$ is uniformly bounded from above on $\R^d$, let say by a constant $\nu\in \R$, it follows that $-A$ satisfies the Bakry-Emery Curvature-Dimension condition $CD(-\nu ,\infty)$ and hence, cf. \cite[Proposition 5.4.1]{ANE}, one obtains
\[
|\nabla T(t)f| \leq e^{\nu t} T(t)|\nabla f|\leq e^{\nu t} \norm{\nabla f}_\infty. 
\]
Hence, arguing similarly, we deduce that $J_r\to 0$ as $r\to\infty$.
Consequently, the last two terms in \eqref{eq2: Cor: fractional kernel estimate example 1} tend to $0$ as $r$ tends to infinity. Thus, the integration by parts formula \eqref{eq3: Cor: fractional kernel estimate example 1} is valid.

We now prove that
\begin{equation}\label{eq7: Cor: fractional kernel estimate example 1}
\int_{\R^d} V(-A^\beta f)\, d\mu=\int_{\R^d} (-A^\beta V)f\, d\mu,
\end{equation}
for any $\beta\in (0,1)$. Let $\varphi$ be a cut-off function such that the support of $f$ is contained in the support of $\varphi$ and $\varphi =1$ on the support of $f$.
Since $C_c^\infty(\R^d)$ is a core for $-A$, we can apply the Balakrishnan representation formula \eqref{Balakrishnan formula} obtaining that
\begin{align*}
\int_{\R^d} V(-A^\beta f)\, d\mu =& \int_{\R^d} \varphi V(-A^\beta f)\, d\mu
=\frac{\sin \beta \pi}{\pi} \int_{\R^d} d\mu \int_0^{+\infty} s^{\beta-1}\varphi V (s+A)^{-1} (-Af)\, ds\\
=& \frac{\sin \beta \pi}{\pi} \int_{\R^d} d\mu  \int_0^{+\infty} s^{\beta-1} ds \int_0^{+\infty} e^{-st} \varphi VT(t)(-Af)dt\\
=& \frac{\sin \beta \pi}{\pi} \int_{\R^d} d\mu  \int_0^{+\infty} s^{\beta-1} ds \int_0^{+\infty} e^{-st} \varphi V(-AT(t)f)dt\\
=& \frac{\sin \beta \pi}{\pi} \int_0^{+\infty} s^{\beta-1} ds \int_0^{+\infty} e^{-st}dt\int_{\R^d} \varphi V(-AT(t)f)d\mu.
\end{align*}
Using the integration by parts formula \eqref{eq3: Cor: fractional kernel estimate example 1}, which remains true if we replace $V$ by $\varphi V$, and since the semigroup $T(\cdot)$ is symmetric, we derive that
\begin{align}\label{eq8: Cor: fractional kernel estimate example 1}
\int_{\R^d} V(-A^\beta f)\, d\mu
=& \frac{\sin \beta \pi}{\pi} \int_0^{+\infty} s^{\beta-1} ds \int_0^{+\infty} e^{-st}dt\int_{\R^d} (-A(\varphi V))T(t)f d\mu\notag\\
=& \frac{\sin \beta \pi}{\pi} \int_0^{+\infty} s^{\beta-1} ds \int_0^{+\infty} e^{-st}dt\int_{\R^d} f T(t) (-A(\varphi V))d\mu\notag\\
=& \frac{\sin \beta \pi}{\pi} \int_{\R^d}f d\mu  \int_0^{+\infty} s^{\beta-1} ds \int_0^{+\infty} e^{-st}T(t) (-A(\varphi V)) dt.
\end{align}
Here we observe that $\varphi V\in D(A)$, since $\varphi V\in C_c^2(\R^d)$ and one can see, from the proof of \cite[Corollary 3.7]{Albanese-Lorenzi-Mangino}, that $C_c^2(\R^d)$ is a core for $-A$.
Consequently, we can apply the Balakrishnan representation formula \eqref{Balakrishnan formula} and we deduce that
\begin{align*}
& \frac{\sin \beta \pi}{\pi} \int_{\R^d}f d\mu  \int_0^{+\infty} s^{\beta-1} ds \int_0^{+\infty} e^{-st}T(t) (-A(\varphi V)) dt\\
=& \frac{\sin \beta \pi}{\pi}\int_{\R^d} fd\mu \int_0^{+\infty} s^{\beta-1} (s+A)^{-1} (-A(\varphi V))\, ds\\
=&\int_{\R^d} (-A^\beta (\varphi V))f\, d\mu
= \int_{\R^d} (-A^\beta V)f\, d\mu.
\end{align*}
Combining this with \eqref{eq8: Cor: fractional kernel estimate example 1} leads to \eqref{eq7: Cor: fractional kernel estimate example 1}.

Moreover, since $C_c^\infty(\R^d)$ is a core for $A$ and $D(A)$ is a core for $A^\beta$ by \cite[Proposition 3.1.1]{Haase}, then \eqref{eq7: Cor: fractional kernel estimate example 1} holds for any $f\in D(A^\beta)$. 
Finally, applying \eqref{eq7: Cor: fractional kernel estimate example 1} with $T_\beta(t)f$ instead of $f$ we find that
\begin{equation}\label{eq4: Cor: fractional kernel estimate example 1}
\int_{\R^d} V(-A^\beta T_\beta (t)f)\, d\mu=\int_{\R^d} (-A^\beta V)T_\beta (t)f\, d\mu,
\end{equation}
for any $\beta \in (0,1)$, $t>0$ and any smooth function $f$ with compact support.

It is now possible to apply Theorem \ref{Thm: fractional kernel estimate for 0<alpha<1} to infer that
\begin{equation}\label{eq1: Cor: fractional kernel estimate example 1}
p_\alpha (t,x,y)\leq K(t)^2 e^{c^\alpha t} V(x)V(y),
\end{equation}
for all $t>0$ and $x,y\in\R^d$, where $K$ is defined as in \eqref{def: function K}.
Computing the function $U(x)$ defined by \eqref{eq1: norm semigroup for 0<alpha<1} we have
\[
U(x)=\int_x^\infty \frac{1}{\gamma u [B(\gamma u)]^\alpha}\, du
= \int_x^\infty (\gamma u)^{-1-\frac{2\alpha}{d}}\, du=\frac{d}{2\alpha \gamma^{1+\frac{2\alpha}{d}}} x^{-\frac{2\alpha}{d}},
\]
for all $x>0$. Moreover, since $U(0)=+\infty$, the function $K(x)$ in \eqref{def: function K} is given by
\[
K(x)=\sqrt{U^{-1}(x)} = C x^{-\frac{d}{4\alpha}},
\]
for some $C>0$.
One derives \eqref{eq: Cor: fractional kernel estimate example 1} from \eqref{eq1: Cor: fractional kernel estimate example 1} by plugging in the definition of $K$.
\end{proof}

We now illustrate the previous result on the examples of measures of Cauchy and exponential types.

\begin{corollary}\label{Cor: example polynomial case}
Let $0<\alpha<1$ and $p_\alpha$ be the integral kernel associated with $A^\alpha$ with $\rho(x)=(1+|x|^2)^{-\beta}$ and $\beta>d$. Then 
\[
p_\alpha(t,x,y) \leq C t^{-\frac{d}{2\alpha}} e^{(\beta d)^\alpha t} (1+|x|^2)^\frac{\beta}{2} (1+|y|^2)^\frac{\beta}{2},
\]
for any $t>0$ and $x,y\in\R^d$, where $C$ is a positive constant.
\end{corollary}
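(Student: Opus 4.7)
The plan is a direct application of Corollary \ref{Cor: fractional kernel estimate example 1} to $\rho(x)=(1+|x|^2)^{-\beta}$ with the weight $V=\rho^{-1/2}=(1+|x|^2)^{\beta/2}$. As a first step I would normalize $\rho$ so that $d\mu=\rho\,dx$ is a probability measure; this is permissible since $\beta>d/2$ and changes $V$ only by a constant factor, which can be absorbed into the final constant $C$. Then I would verify, in turn, the three hypotheses (1)--(3) of that corollary and identify the Lyapunov constant.

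For hypothesis (1), the integrability $V\in L^1(\mu)$ boils down to $\int_{\R^d}(1+|x|^2)^{-\beta/2}\,dx<\infty$, which holds precisely because $\beta>d$; the membership $AV\in L^1(\mu)$ follows from a similar polynomial decay estimate once $AV$ is computed explicitly. The crucial point is to establish $-AV\leq cV$ with the correct constant $c$. Using $\nabla\log\rho=-2\beta x/(1+|x|^2)$, I would compute
\[
-AV(x)=\beta(1+|x|^2)^{\beta/2-2}\bigl[d+(d-\beta-2)|x|^2\bigr],
\]
divide by $V$, and maximize the resulting rational function of $|x|^2$ on $[0,\infty)$. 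The expected (and, for matching the exponential factor $e^{(\beta d)^\alpha t}$ in the statement, necessary) outcome is that the maximum is attained at $|x|=0$ and equals $\beta d$, so $c=\beta d$.

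For hypothesis (2), I would diagonalize the Hessian of $\log\rho=-\beta\log(1+|x|^2)$: in the radial direction its eigenvalue is $2\beta(|x|^2-1)/(1+|x|^2)^2$ and in the transverse directions it is $-2\beta/(1+|x|^2)$; both are globally bounded from above (by $\beta/4$ and $0$ respectively). For hypothesis (3), the sphere suprema $\rho^{1/2}r^{d-1}$ and $|\nabla\rho|\rho^{-1/2}r^{d-1}$ reduce to $r^{d-1}(1+r^2)^{-\beta/2}$ and $2\beta r^d(1+r^2)^{-\beta/2-1}$, both of which tend to $0$ as $r\to\infty$ since $\beta>d$.

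Once the hypotheses are checked with $c=\beta d$, Corollary \ref{Cor: fractional kernel estimate example 1} directly yields the claimed bound with $C$ absorbing all multiplicative constants (including the normalization of $\mu$). The main (though routine) obstacle is the explicit computation of $-AV/V$ and the verification that its maximum over $\R^d$ equals $\beta d$, i.e.\ the sharp identification of the Lyapunov constant; the remaining steps reduce to standard polynomial asymptotics.
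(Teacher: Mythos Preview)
Your proposal is correct and follows essentially the same route as the paper: verify hypotheses (1)--(3) of Corollary~\ref{Cor: fractional kernel estimate example 1} for $\rho(x)=(1+|x|^2)^{-\beta}$ with $V=\rho^{-1/2}$, identify the Lyapunov constant as $c=\beta d$ from the explicit formula $-AV=\beta(1+|x|^2)^{\beta/2-2}[d+(d-\beta-2)|x|^2]$, and then read off the kernel bound. The only cosmetic differences are that the paper bounds $-AV/V$ by splitting it as $\beta d(1+|x|^2)^{-1}-\beta(\beta+2)|x|^2(1+|x|^2)^{-2}\le\beta d$ rather than maximizing the rational function, and bounds the Hessian of $\log\rho$ via the quadratic form (obtaining $4\beta$) rather than via its eigenvalues.
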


\begin{proof}
We check the assumptions of Corollary \ref{Cor: fractional kernel estimate example 1}. Since $\beta>d$, then $d\mu(x)=\rho(x)dx$ is a probability measure up to a positive constant. Moreover, since 
\[
\int_{\R^d} \rho(x)^{-1/2} d\mu(x)=\int_{\R^d} (1+|x|^2)^{-\frac{\beta}{2}}\, dx<\infty,
\]
then the function 
\[
V(x)=\rho(x)^{-1/2}=(1+|x|^2)^\frac{\beta}{2}
\]
belongs to $L^1(\mu)$. 
We now verify that $-AV\leq cV$ for some $c\geq 0$. Let $x\in \R^d$. By straightforward computations we have
\begin{align*}
\nabla V(x)&=\beta (1+|x|^2)^{-1} x V(x),\\
\Delta V(x)&= \beta(1+|x|^2)^{-2} [(d+\beta-2)|x|^2+d]V(x),\\
\nabla \log\rho(x)&=-2\beta (1+|x|^2)^{-1} x.
\end{align*}
Hence, we obtain
\begin{align*}
-AV(x)
&= \Delta V(x) + \nabla \log\rho(x)\cdot\nabla V(x)\\
&= \beta(1+|x|^2)^{-2} [(d+\beta-2)|x|^2+d]V(x)-2\beta^2 (1+|x|^2)^{-2} |x|^2 V(x)\\
&=\beta(1+|x|^2)^{-2} [(d-\beta-2)|x|^2+d]V(x)\\
&=\left[\beta d (1+|x|^2)^{-1} -\beta(\beta+2)(1+|x|^2)^{-2} |x|^2\right]V(x).
\end{align*}
Since $(1+|x|^2)^{-1}\leq 1$ and the second term in the square brackets in the right hand side is negative, it leads to
\[
-AV(x)\leq \beta d V(x).
\]
It implies that we can choose $c=\beta d\geq 0$ as Lyapunov constant. From the above computations it is also clear that $AV\in L^1(\mu)$. 
This proves assumption (a) in Corollary \ref{Cor: fractional kernel estimate example 1}.

The next step is to show (b). Let $x\in\R^d$. First we observe that, for $i,j=1,\dots,d$, we have
\[
D_{ij}\log\rho(x)=4\beta (1+|x|^2)^{-2} x_ix_j-2\beta (1+|x|^2)^{-1}\delta_{ij},
\]
where $\delta_{ij}$ is the Kronecker delta. Then, for $\xi\in\R^d$, we get
\begin{align*}
\sum_{i,j=1}^d D_{ij}\log\rho(x)\xi_i\xi_j
&= 4\beta (1+|x|^2)^{-2}\left(\sum_{i=1}^d  x_i\xi_i\right)^2-2\beta  (1+|x|^2)^{-1} \sum_{i=1}^d \xi_i^2\\
&\leq 4\beta (1+|x|^2)^{-2} |x|^2 |\xi|^2 -2\beta  (1+|x|^2)^{-1} |\xi|^2.
\end{align*}
Given that $(1+|x|^2)^{-2} |x|^2\leq 1$ and that the last term in the right hand side of the previous inequality is negative, we deduce that
\[
\sum_{i,j=1}^d D_{ij}\log\rho(x)\xi_i\xi_j \leq 4\beta |\xi|^2.
\]
Hence, the Hessian of $\log \rho$ is uniformly bounded from above on $\R^d$ by the constant $4\beta$.

Finally, we check (c). For $x\in\R^d$, we have
\begin{align*}
\rho(x)^{1/2} r^{d-1}&= (1+|x|^2)^{-\frac{\beta}{2}}r^{d-1},\\
|\nabla\rho(x)| \rho(x)^{-1/2} r^{d-1}&= 2\beta (1+|x|^2)^{-\frac{\beta+2}{2}} |x| r^{d-1}.
\end{align*}
Therefore, since $\beta>d$, condition (c) is satisfied.

Then the statement follows by inequality \eqref{eq: Cor: fractional kernel estimate example 1} considering that $c=\beta d$ and $V(x)=(1+|x|^2)^\frac{\beta}{2}$.
\end{proof}

\begin{corollary}\label{Cor: example exponential case}
Let $0<\alpha<1$ and $p_\alpha$ be the integral kernel associated with $A^\alpha$.
If $\rho(x)=e^{-(1+|x|^2)^{a/2}}$ and $0<a<2$, then
\begin{equation}\label{eq3: Cor: kernel estimate exponential measure}
p_\alpha(t,x,y) \leq C t^{-\frac{d}{2\alpha}} e^{c^\alpha t} e^\frac{(1+|x|^2)^{a/2}}{2} e^\frac{(1+|y|^2)^{a/2}}{2},
\end{equation}
for any $t>0$ and $x,y\in\R^d$.
If $\rho(x)=e^{-|x|^a}$ and $a\geq 2$, then 
\begin{equation}\label{eq4: Cor: kernel estimate exponential measure}
p_\alpha(t,x,y) \leq C t^{-\frac{d}{2\alpha}} e^{c^\alpha t} e^\frac{|x|^a}{2} e^\frac{|y|^a}{2},
\end{equation}
for any $t>0$ and $x,y\in\R^d$.
Here $C$ is a positive constant. Moreover, $c=\frac{1}{2} ad$ if $a\leq 2$ and $c=\frac{1}{2}a(a+d-2)K^{a-2}$ with $K\geq  \left( \frac{2(a+d-2)}{a}\right)^{1/a}$ if $a\geq 2$.
\end{corollary}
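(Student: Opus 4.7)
The plan is to verify hypotheses (a)--(c) of Corollary \ref{Cor: fractional kernel estimate example 1} in each of the two regimes with the Lyapunov candidate $V=\rho^{-1/2}$, and then to read off the desired kernel estimate by substituting the explicit form of $V$. The central step is establishing the Lyapunov inequality $-AV\leq cV$ with the correct constant. Writing $V=e^{\psi}$ with $\psi=-\tfrac{1}{2}\log\rho$, so that $\nabla\log\rho=-2\nabla\psi$, one obtains
\[
-AV/V \;=\; \Delta\psi+|\nabla\psi|^2+\nabla\log\rho\cdot\nabla\psi \;=\; \Delta\psi-|\nabla\psi|^2,
\]
which reduces the Lyapunov estimate to a scalar computation.

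For $\rho(x)=e^{-(1+|x|^2)^{a/2}}$ with $0<a<2$, I would take $\psi=\tfrac{1}{2}(1+|x|^2)^{a/2}$ to get
\[
-AV/V \;=\; \tfrac{ad}{2}(1+|x|^2)^{a/2-1}+\tfrac{a(a-2)}{2}|x|^2(1+|x|^2)^{a/2-2}-\tfrac{a^2}{4}|x|^2(1+|x|^2)^{a-2}.
\]
Since $a<2$ the last two terms are non-positive while $(1+|x|^2)^{a/2-1}\leq 1$, giving $-AV\leq (ad/2)V$ and $c=ad/2$. For $\rho(x)=e^{-|x|^a}$ with $a\geq 2$, taking $\psi=|x|^a/2$ produces
\[
-AV/V \;=\; \tfrac{a(a+d-2)}{2}|x|^{a-2}-\tfrac{a^2}{4}|x|^{2a-2}.
\]
The negative term dominates precisely when $|x|^a\geq 2(a+d-2)/a$, so for any $K\geq(2(a+d-2)/a)^{1/a}$ I would split $\R^d$ into $\{|x|\leq K\}$, on which monotonicity of $r\mapsto r^{a-2}$ bounds the right-hand side by $\tfrac{a(a+d-2)}{2}K^{a-2}$, and $\{|x|\geq K\}$, on which the right-hand side is non-positive. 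This gives the stated Lyapunov constant.

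The remaining assumptions are routine. Integrability $V\in L^1(\mu)$ reduces to $\int_{\R^d}\rho^{1/2}\,dx<\infty$, which follows from the exponential decay of $\rho$, and $AV\in L^1(\mu)$ from the explicit formulas above combined with the same exponential bound. For hypothesis (b) I would differentiate $\log\rho$ twice: in the first case the Hessian is controlled by $(1+|x|^2)^{a/2-1}$ and $(1+|x|^2)^{a/2-2}|x|^2$, both bounded because $a<2$; in the second case it equals $-a(a-2)|x|^{a-4}x\otimes x-a|x|^{a-2}I$, which is negative semi-definite for $a\geq 2$. Hypothesis (c) is immediate since $\rho^{1/2}$ decays exponentially. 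Applying Corollary \ref{Cor: fractional kernel estimate example 1} and substituting $V(x)=e^{(1+|x|^2)^{a/2}/2}$ or $V(x)=e^{|x|^a/2}$ then yields \eqref{eq3: Cor: kernel estimate exponential measure} and \eqref{eq4: Cor: kernel estimate exponential measure}. The main obstacle is the Lyapunov bound when $a\geq 2$, where the non-monotone behavior of $-AV/V$ in $|x|$ forces the split-range argument at radius $K$ described above; in the other case everything reduces to routine differentiation and sign comparison.
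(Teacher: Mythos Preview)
Your proposal is correct and follows essentially the same route as the paper: verify hypotheses (a)--(c) of Corollary~\ref{Cor: fractional kernel estimate example 1} by direct computation, with the same split at radius $K$ in the case $a\geq 2$. The only cosmetic difference is that you package the Lyapunov computation via the identity $-AV/V=\Delta\psi-|\nabla\psi|^2$ with $\psi=-\tfrac12\log\rho$, whereas the paper computes $\nabla V$, $\Delta V$, and $\nabla\log\rho$ separately; the resulting expressions and the sign arguments are identical.
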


\begin{proof}
In view of applying Corollary \ref{Cor: fractional kernel estimate example 1}, we verify conditions (a), (b) and (c).
In both cases, up to a positive constant, $d\mu(x)=\rho(x)dx$ is a probability measure and $V(x)=\rho(x)^{-1/2}$ belongs to $L^1(\mu)$.
We now distinguish between the two cases.

\smallskip

\textit{Case 1}: $\rho(x)=e^{-(1+|x|^2)^{a/2}}$ and $0<a<2$.

We check that $-AV\leq cV$ for some $c\geq 0$ to show that (a) holds. Let $x\in \R^d$. Easy computations lead to
\begin{align*}
\nabla V(x)&=\frac{1}{2} a (1+|x|^2)^\frac{a-2}{2} x V(x),\\
\Delta V(x)&= \left[\frac{1}{2} a(a-2)(1+|x|^2)^\frac{a-4}{2}|x|^2+\frac{1}{2} ad(1+|x|^2)^\frac{a-2}{2}+\frac{1}{4} a^2(1+|x|^2)^{a-2}|x|^2\right]V(x),\\
\nabla \log\rho(x)&=-a (1+|x|^2)^\frac{a-2}{2} x.
\end{align*}
So we get
\begin{align}\label{eq1: Cor: kernel estimate exponential measure}
-AV(x)
&= \Delta V(x) + \nabla \log\rho(x)\cdot\nabla V(x)\notag\\
&= \left[\frac{1}{2} a(a-2)(1+|x|^2)^\frac{a-4}{2}|x|^2+\frac{1}{2} ad(1+|x|^2)^\frac{a-2}{2}-\frac{1}{4} a^2(1+|x|^2)^{a-2}|x|^2\right]V(x).
\end{align}
Since $0<a<2$, the first term in the right hand side of \eqref{eq1: Cor: kernel estimate exponential measure} is negative together with the third one and $(1+|x|^2)^\frac{a-2}{2}\leq 1$. Hence, we deduce that
\[
-AV(x) \leq \frac{1}{2} ad V(x).
\]
We then choose $c=\frac{1}{2} ad$ as Lyapunov constant.

We now prove (b), namely that the Hessian of $\log \rho$ is uniformly bounded from above on $\R^d$. Let $x\in\R^d$. For $i,j=1,\dots,d$ we have
\[
D_{ij}\log\rho(x)=a(2-a)(1+|x|^2)^\frac{a-4}{2} x_ix_j-a(1+|x|^2)^\frac{a-2}{2}\delta_{ij}.
\]
Thus, we deduce 
\begin{align*}
\sum_{i,j=1}^d D_{ij}\log\rho(x)\xi_i\xi_j
&= a(2-a)(1+|x|^2)^\frac{a-4}{2}\left(\sum_{i=1}^d  x_i\xi_i\right)^2-a(1+|x|^2)^\frac{a-2}{2} |\xi|^2\\
&\leq a(2-a)(1+|x|^2)^\frac{a-4}{2}|x|^2|\xi|^2\leq a(2-a)|\xi|^2,
\end{align*}
for any $\xi\in\R^d$. It leads to condition (b).

Moreover, for $x\in\R^d$, we find that
\begin{align*}
\rho(x)^{1/2} r^{d-1}&= e^{-\frac{(1+|x|^2)^{a/2}}{2}}r^{d-1},\\
|\nabla\rho(x)| \rho(x)^{-1/2} r^{d-1}&= a (1+|x|^2)^{\frac{a-2}{2}} |x| e^{-\frac{(1+|x|^2)^{a/2}}{2}} r^{d-1}.
\end{align*}
From that it is easy to see that condition (c) is also verified.
Finally, applying Corollary \ref{Cor: fractional kernel estimate example 1}, inequality \eqref{eq3: Cor: kernel estimate exponential measure} follows.

\smallskip

\textit{Case 2}: $\rho(x)=e^{-|x|^a}$ and $a\geq 2$.

For all $x\in\R^d$ we have that
\begin{equation}\label{eq2: Cor: kernel estimate exponential measure}
-AV(x)
= \Delta V(x) + \nabla \log\rho(x)\cdot\nabla V(x)
= \frac{1}{2} a |x|^{2a-2} V(x)\left[(a+d-2) |x|^{-a}-\frac{1}{2} a \right].
\end{equation}
Assume that $|x|\geq K$ for some $K>0$. Since $a+d-2>0$, we get
\[
(a+d-2)|x|^{-a}-\frac{1}{2} a\leq (a+d-2)K^{-a}-\frac{1}{2} a.
\]
Hence, the quantity in the square brackets in the right hand side of \eqref{eq2: Cor: kernel estimate exponential measure} is negative if we take 
\[
K\geq  \left( \frac{2(a+d-2)}{a}\right)^{1/a}.
\]
We deduce that $-AV(x)\leq 0$ for any $x\in\R^d$ such that $|x|\geq K$.
For the remaining values of $x$, $|x|<K$, we drop the second term in the square brackets in the right hand side of \eqref{eq2: Cor: kernel estimate exponential measure} because it's negative, obtaining that
\[
-AV(x)\leq \frac{1}{2}a(a+d-2)K^{a-2} V(x).
\]
We conclude that $-AV(x)\leq cV(x)$ for all $x\in\R^d$,  where $c=\frac{1}{2}a(a+d-2)K^{a-2}$.
This proves condition (a) in Corollary \ref{Cor: fractional kernel estimate example 1}.

We now compute the second partial derivatives of $\log\rho$ for any $i,j=1,\dots,d$ and $x\in\R^d$
\[
D_{ij}\log\rho(x)=a(2-a)|x|^{a-4} x_ix_j-a|x|^{a-2}\delta_{ij}.
\]
Then, since $a\geq 2$, we have that
\[
\sum_{i,j=1}^d D_{ij}\log\rho(x)\xi_i\xi_j=a(2-a)|x|^{a-4} \left(\sum_{i=1}^d  x_i\xi_i\right)^2-a|x|^{a-2} |\xi|^2\leq 0,
\]
for all $\xi\in\R^d$. This yields (b).

Lastly, for $x\in\R^d$ we have that
\begin{align*}
\rho(x)^{1/2} r^{d-1}&= e^{-\frac{|x|^a}{2}}r^{d-1},\\
|\nabla\rho(x)| \rho(x)^{-1/2} r^{d-1}&= a |x|^{a-1} e^{-\frac{|x|^a}{2}} r^{d-1}.
\end{align*}
From that we deduce that (c) holds.
Then inequality \eqref{eq4: Cor: kernel estimate exponential measure} follows by Corollary \ref{Cor: fractional kernel estimate example 1}.
\end{proof}

\begin{remark}
The operator $A$ in $L^2(\mu)$ is equivalent to the Schr\"odinger operator $B=-\Delta -U$ in $L^2(\R^d)$, where
\[
U:=\frac{1}{4} \left| \frac{\nabla \rho}{\rho}\right|^2 -\frac{1}{2} \frac{\Delta \rho}{\rho}.
\]
Indeed, if we consider the transformation $T:L^2(\R^d)\to L^2(\mu)$ defined by $Tf=\frac{1}{\sqrt{\rho}}f$ for any $f\in  L^2(\R^d)$, we have that $B=T^{-1} A T$.
Moreover, computing $B^\alpha$ for $0<\alpha<1$ via the Balakrishnan representation formula \eqref{Balakrishnan formula}, it turns out that
\[
B^\alpha=T^{-1} A^\alpha T.
\]
So, the fractional operator $A^\alpha$ in $L^2(\mu)$ is equivalent to $(-\Delta-U)^\alpha$ in $L^2(\R^d)$.
We also observe that the integral kernel $k_\alpha(t,x,y)$ associated with the operator $-B^\alpha$, is given by
\begin{equation}\label{eq: rem: kernel Schrodinger operator}
k_\alpha(t,x,y)= \sqrt{\rho(x)\rho(y)}\,p_\alpha(t,x,y)
\end{equation}
for any $t>0$ and $x,y\in\R^d$.
Consequently, applying the results above, we find estimates for the kernel associated with the fractional power $B^\alpha$ of the Schr\"odinger operator $B$ in $L^2(\R^d)$ for any $0<\alpha<1$.
For instance, if $\rho(x)=(1+|x|^2)^{-\beta}$ with $\beta>d$, then $A$ is equivalent to the Schr\"odinger operator
\[
B=-\Delta +\beta(\beta+2) \frac{|x|^2}{(1+|x|^2)^{2}} -\frac{\beta d}{1+|x|^2}.
\]
Then, by \eqref{eq: rem: kernel Schrodinger operator} and Corollary \ref{Cor: example polynomial case}, the integral kernel $k_\alpha$ associated with $B^\alpha$ satisfies
\[
k_\alpha(t,x,y) \leq C t^{-\frac{d}{2\alpha}} e^{(\beta d)^\alpha t},
\]
for any $t>0$ and $x,y\in\R^d$, where $C$ is a positive constant.
Similarly, in the case $\rho(x)=e^{-|x|^a}$ with $a\geq 2$,  $A$ is equivalent to the Schr\"odinger operator
\[
B=-\Delta - \frac{1}{2} a(a+d-2) |x|^{a-2}+\frac{1}{4} a^2 |x|^{2a-2}.
\]
If we now apply Corollary \ref{Cor: example exponential case}, then we find the following estimate for the integral kernel $k_\alpha$ associated with $B^\alpha$
\[
k_\alpha(t,x,y) \leq C t^{-\frac{d}{2\alpha}} e^{c^\alpha t}
\]
for any $t>0$ and $x,y\in\R^d$, where $C$ is a positive constant and $c$ is defined as in the statement of Corollary \ref{Cor: example exponential case}.
\end{remark}

\end{document}